\title[Polish groups with metrizable UMF]{Polish groups with metrizable universal minimal flows}
\author[J.~Melleray]{Julien Melleray}
\address{Universit\'e de Lyon \\
  CNRS UMR 5208, Universit\'e Lyon 1 \\
  Institut Camille Jordan \\
  43 blvd. du 11 novembre 1918 \\
  F-69622 Villeurbanne \sc{cedex}, France}
\email{melleray@math.univ-lyon1.fr}
\author[L.~Nguyen~Van~Th\'e]{Lionel Nguyen~Van~Th\'e}
\address{Aix-Marseille Universit\'e, CNRS \\
  Centrale Marseille, I2M, UMR 7373 \\ 
  13453 Marseille, France} 
\email{lionel.nguyen-van-the@univ-amu.fr}
\author[T.~Tsankov]{Todor Tsankov}
\address{Institut de Math\'ematiques de Jussieu--PRG \\
Universit\'e Paris Diderot, Case 7012 \\
75205 Paris \sc{cedex} 13, France}
\email{todor@math.univ-paris-diderot.fr}
\thanks{Research partially supported by the ANR project GrupoLoco (ANR-11-JS01-0008).}
\setlist[enumerate,1]{label=(\roman*), font=\normalfont}
\numberwithin{equation}{section}
\subjclass[2010]{Primary: 37B05 %Transformations and group actions with special properties (minimality, distality, proximality, etc)
; Secondary: 03C15 %Denumerable structures
03E02 %Partition relations
03E15 %Descriptive set theory
05C55 %Generalized Ramsey theory
22F50 %Groups as automorphisms of other structures
43A07 %Means on groups and semigroups, amenable groups
54H20 %Topological dynamics
}
\keywords{Non locally compact transformation groups, universal minimal flow, universal minimal proximal flow, extreme amenability, strong amenability, Ramsey theory}
\begin{document}
\maketitle

\begin{abstract}
  We prove that if the universal minimal flow of a Polish group $G$ is metrizable and contains a $G_\delta$ orbit $G \cdot x_0$, then it is isomorphic to the completion of the homogeneous space $G/G_{x_0}$ and show how this result translates naturally in terms of structural Ramsey theory. We also investigate universal minimal proximal flows and describe concrete representations of them in a number of examples.
\end{abstract}

\section{Introduction}
The connections between structural Ramsey theory and topological dynamics first became apparent in the work of Pestov~\cite{Pestov1998}, where, using the classical Ramsey theorem, he showed that the automorphism group of the dense countable linear order $(\Q, <)$ is \df{extremely amenable} (i.e., every time it acts continuously on a compact space, there is a fixed point) and produced the first interesting example of a non-trivial, metrizable, universal minimal flow. Later, Glasner and Weiss~\cite{Glasner2002a}, again using the Ramsey theorem, proved that the space of all linear orderings on a countable set is the universal minimal flow of the infinite permutation group; then, they also calculated the universal minimal flow of the homeomorphism group of the Cantor space \cite{Glasner2003a}. Inspired by those results, Kechris, Pestov, and Todorcevic~\cite{Kechris2005}, using the general framework of \emph{Fraïssé limits}, formulated a precise correspondence between the structural Ramsey property for a Fraïssé class and the extreme amenability of the corresponding automorphism group. They also developed a general method for calculating universal minimal flows and their work spawned a renewed interest in structural Ramsey theory. 

It turns out that for most Fraïssé classes that have been considered in the literature, even if they do not have the Ramsey property, they are often not far from having it: namely, it is possible to expand the class by an ordering, and perhaps some additional structure, in a way that the resulting Fraïssé class does have the Ramsey property. A corollary of the main result of this paper, Theorem~\ref{t:met1}, is a characterization, in terms of topological dynamics, of when this happens.

An important motivation for our work was the following question, raised in \cite{Bodirsky2013}, whether, under rather general conditions, a Ramsey expansion always exists.

\begin{question} \label{q:olig-Ramsey}
Let $\bF$ be a Fraïssé structure whose age has only finitely many non-isomorphic structures in every finite cardinality (equivalently, such that the action $\Aut(\bF) \actson \bF$ is oligomorphic). Does $\bF$ always admit a Ramsey precompact expansion?  
\end{question}

Even though initially we were mostly motivated by Ramsey theory for discrete structures (that is, automorphism groups that are subgroups of $S_\infty$), our methods and results are more naturally placed in the more general setting of Polish groups.

If $G$ is a topological group, a \df{$G$-flow} is a compact Hausdorff space $X$ equipped with a continuous action of $G$. A flow is \df{minimal} if it has no proper subflows; it is a standard fact in topological dynamics, due to Ellis, that every topological group $G$ has a universal minimal flow $M(G)$ such that every other minimal $G$-flow is a factor of it, and that it is unique up to isomorphism.

If $G$ is a topological group and $H$ is a closed subgroup, the homogeneous space $G/H$ is equipped with a natural uniformity (the quotient of the right uniformity of $G$), defined as follows: entourages of the diagonal $U_V$ are indexed by symmetric neighborhoods $V$ of $1_G$ and are given by
\begin{equation} \label{eq:unif}
U_V = \set{(gH, vgH) : g \in G, v \in V}.
\end{equation}
Note that this uniformity is compatible with the quotient topology on $G/H$ and, in the case where $G$ is Polish, it is metrizable by the distance
\begin{equation} \label{eq:dist}
d(g_1H, g_2H) = \inf_{h \in H} d_R(g_1h, g_2)
\end{equation}
(where $d_R$ is some right-invariant distance on $G$), but usually not complete. We will denote by $\widehat{G/H}$ the completion of $G/H$ and will say that $G/H$ is \df{precompact} (or sometimes, that $H$ is \df{co-precompact} in $G$) if $\widehat{G/H}$ is compact. Equivalently, $H$ is co-precompact in $G$ iff for every neighborhood $V \ni 1_G$, there exists a finite set $F \sub G$ such that $VFH = G$.

In the situation where $G$ is the automorphism group of a Fraïssé structure $\bF$ and $G^*$ is the automorphism group of some homogeneous expansion $\bF^*$ of $\bF$, it is not difficult to see that $\widehat{G/G^*}$ can be identified with the space $X$ of all expansions of $\bF$ whose age is contained in the age of $\bF^*$ with the topology given by the basis of sets of the form
\[
U_{\bA, \bA^*} = \set{x \in X : x|_\bA = \bA^*},
\]
where $\bA$ is a finite substructure of $\bF$ and $\bA^*$ is an expansion of $\bA$ in the age of $\bF^*$. Thus we see that $G/G^*$ is precompact iff every finite substructure $\bA \sub \bF$ has only finitely many expansions in the age of $\bF^*$. For example, if $G \actson \bM$ is an oligomorphic permutation group, $G^*$ is co-precompact in $G$ iff the action $G^* \actson \bM$ is oligomorphic. See \cite{NguyenVanThe2013} for more on precompact expansions.

Observe that if $G/H$ is precompact, then $\widehat{G/H}$ is a $G$-flow and, as $G/H$ is a Polish space and embeds homeomorphically in its completion, it is always a dense $G_\delta$ subset of $\widehat{G/H}$. The flow $\widehat{G/H}$ has the following universal property: if $G \actson X$ is any $G$-flow and $x_0 \in X$ is a point fixed by $H$, then there is a unique morphism of $G$-flows $\pi \colon \widehat{G/H} \to X$ such that $\pi(H) = x_0$. (This is true simply because the map $G/H \to X$, $gH \mapsto g \cdot x_0$ is uniformly continuous.) If it happens that the group $H$ is extremely amenable, then this implies that the flow $\widehat{G/H}$ is universal in the sense that it maps to every other $G$-flow. If, in addition, the flow $\widehat{G/H}$ is minimal, then it is the universal minimal flow of $G$. This technique for calculating universal minimal flows was first considered by Pestov in \cite{Pestov2006}*{Section~4.5}. It is worth pointing out that \emph{all} known metrizable universal minimal flows of Polish groups are of this form and it is an open question whether it is true in general. In the following theorem, we provide a positive answer, under the additional assumption of the existence of a $G_\delta$ orbit.
\begin{theorem}
\label{t:met1}
Let $G$ be a Polish group and $M(G)$ be its universal minimal flow. Then the following are equivalent: 
\begin{enumerate}
\item \label{met1:i:1} The flow $M(G)$ is metrizable and has a $G_\delta$ orbit. 
\item \label{met1:i:2} There is a closed, co-precompact, extremely amenable subgroup $G^{*}\leq G$ such that $M(G)=\widehat{G/G^{*}}$. 
\end{enumerate}
\end{theorem}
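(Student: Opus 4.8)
The plan is to establish the implication \ref{met1:i:1} $\Rightarrow$ \ref{met1:i:2}, the converse being immediate from the discussion preceding the theorem: if $M(G) = \widehat{G/G^*}$ for some closed $G^* \le G$, then $M(G)$ is the completion of the separable metric space $G/G^*$, hence Polish and in particular metrizable, and $G/G^* = G \cdot (1_G G^*)$ is a dense $G_\delta$ orbit in it, so \ref{met1:i:1} holds. So assume \ref{met1:i:1}, fix $x_0 \in M(G)$ whose orbit $G \cdot x_0$ is $G_\delta$ in $M(G)$, and set $G^* := G_{x_0}$, a closed subgroup of $G$. The two things to prove are that $M(G) = \widehat{G/G^*}$ as $G$-flows (which also gives co-precompactness of $G^*$) and that $G^*$ is extremely amenable.

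For the first, note that $M(G)$ is Polish (being compact metrizable), so $G \cdot x_0$ is a Polish subspace on which $G$ acts continuously and transitively, and it is dense in $M(G)$ by minimality. By Effros's theorem — applicable since $G \cdot x_0$, being a dense $G_\delta$, is non-meager in $M(G)$ — the orbit map $G \to G \cdot x_0$ is open, so $G/G^* \to G \cdot x_0$ is a homeomorphism. It then remains to check that under this homeomorphism the quotient uniformity on $G/G^*$ from \eqref{eq:unif} coincides with the uniformity the orbit inherits from the (unique) compatible uniformity of the compact space $M(G)$. One inclusion follows because the action of $G$ on the compact space $M(G)$ is uniformly equicontinuous (as any action on a compact space is), which makes $g \mapsto g \cdot x_0$ uniformly continuous from the right uniformity of $G$ to $M(G)$, so the quotient uniformity is finer than the induced one. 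For the reverse inclusion I use openness of the orbit map: given a symmetric neighbourhood $V \ni 1_G$, openness yields an entourage $E$ of $M(G)$ with $E[x_0] \cap G \cdot x_0 \subseteq V \cdot x_0$, i.e. $g \cdot x_0 \in E[x_0]$ forces $g \in V G^*$; transporting this estimate by the $G$-action, and using uniform equicontinuity once more to make it uniform in $g$, shows that the induced uniformity refines the entourage $U_V$. Hence $M(G) = \widehat{G/G^*}$ $G$-equivariantly, and in particular $\widehat{G/G^*}$ is compact, i.e. $G^*$ is co-precompact.

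To see that $G^*$ is extremely amenable, let $Y$ be any $G^*$-flow; I must produce a $G^*$-fixed point. Since $G/G^*$ is precompact, I would invoke the induced-flow (``change of group'') construction: it yields a $G$-flow $X$, realizable as a suitable space of functions $f \colon G \to Y$ satisfying $f(gh) = h^{-1} \cdot f(g)$ for all $g \in G$, $h \in G^*$, with $G$ acting by left translation, together with a continuous $G^*$-equivariant evaluation map $e \colon X \to Y$, $f \mapsto f(1_G)$. Because $M(G)$ is the universal minimal flow, it maps onto every minimal subflow of $X$, giving a $G$-map $\Psi \colon M(G) \to X$; then $\Psi(x_0)$ is fixed by $G^* = G_{x_0}$ by equivariance, so $e(\Psi(x_0)) \in Y$ is a $G^*$-fixed point. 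Thus $G^*$ is extremely amenable, and $G^* = G_{x_0}$ witnesses \ref{met1:i:2}.

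The two technical inputs carry the weight. The first — matching the quotient uniformity of $G/G_{x_0}$ with the uniformity the orbit inherits from $M(G)$ — is, I expect, the conceptual heart: it is precisely Effros's theorem (micro-transitivity / openness of the orbit map), made available by the $G_\delta$ hypothesis, that forces these two a priori distinct uniformities to agree, and this is what converts the purely dynamical hypothesis into the homogeneous-space description. The second input, the existence and basic properties of the induced flow $X$, is where care is needed because $G$ need not be locally compact: one has to verify that $X$ is a nonempty compact space carrying a continuous $G$-action, and this is exactly where precompactness of $G/G^*$ enters essentially, via an Ascoli-type equicontinuity argument; I would isolate this as a separate lemma or cite it.
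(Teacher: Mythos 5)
The direction \ref{met1:i:2}$\Rightarrow$\ref{met1:i:1} and the overall architecture of your converse (set $G^{*}=G_{x_0}$, identify $M(G)$ with $\widehat{G/G^{*}}$, then deduce extreme amenability of $G^{*}$) match the paper, but the first half of your argument for \ref{met1:i:1}$\Rightarrow$\ref{met1:i:2} has a genuine gap. To show that the uniformity the orbit inherits from $M(G)$ refines the quotient uniformity, you need, for each symmetric $V\ni 1_G$, a \emph{single} entourage $E$ of $M(G)$ such that $E[g\cdot x_0]\cap G\cdot x_0\subseteq V\cdot(g\cdot x_0)$ for \emph{every} $g\in G$, i.e.\ openness of the orbit map with a modulus uniform over the whole orbit. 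Effros's theorem gives openness at each point with a modulus depending on the point, and your proposed fix --- ``transporting this estimate by the $G$-action and using uniform equicontinuity to make it uniform in $g$'' --- does not work: the fact you invoke (for every entourage $E$ there is $V\ni 1_G$ with $(x,v\cdot x)\in E$ for all $x$ and all $v\in V$) controls translation by group elements \emph{near the identity} uniformly in $x$, whereas transporting the estimate from $x_0$ to $g\cdot x_0$ requires controlling how the homeomorphism $x\mapsto g\cdot x$ distorts entourages uniformly in $g$ --- that is, equicontinuity of the flow, which fails in general (e.g.\ $\mathrm{LO}(\N)$ as an $S_\infty$-flow is not equicontinuous). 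A compactness argument does not rescue this either, since two orbit points that are $E$-close may both lie near a limit point \emph{outside} the orbit, where Effros gives nothing. Tellingly, your argument for this step never uses that the flow is the \emph{universal} minimal flow; if it were correct it would show, by a purely soft argument, that every minimal metrizable flow with a dense $G_\delta$ orbit is isomorphic to $\widehat{G/G_{x_0}}$ with $G_{x_0}$ co-precompact. The paper's Proposition~\ref{p:precpct} circumvents exactly this obstruction: it works with the Samuel compactification $S(G/G^{*})$ and uses the universal property of $M(G)$ together with its coalescence to produce an isomorphism $M(G)\cong S(G/G^{*})$, and precompactness is then read off from the metrizability of $S(G/G^{*})$ rather than proved by comparing uniformities directly.

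The second half also rests on an unproved construction. For non-locally-compact $G$, building the induced $G$-flow of a $G^{*}$-flow --- a nonempty compact space of equivariant functions carrying a \emph{continuous} $G$-action --- is a substantial task: the naive function space with the product topology does not admit a continuous action, and the Ascoli-type compactness you allude to needs an equicontinuity input that must be extracted from co-precompactness. You neither prove this nor point to a reference, and it is where the real work of that half would lie. The paper instead proves extreme amenability via finite oscillation stability: it suffices to find $G^{*}$-fixed points in orbit closures of $1$-Lipschitz functions on $G^{*}$; such functions extend to $G$ and therefore live in an honest $G$-flow, and the fixed point is located using the product flow $\mcF\times\widehat{G/G^{*}}$ together with Lemma~\ref{l:fixed-pts}.
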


In the setting of Fraïssé limits, Theorem~\ref{t:met1} translates to the following.
\begin{cor}
\label{c:met2}
Let $\bF$ be a Fraïssé structure and let $G=\Aut(\bF)$. The following are equivalent: 
\begin{enumerate}
\item The flow $M(G)$ is metrizable and has a $G_\delta$ orbit.
\item The structure $\bF$ admits a Fraïssé precompact expansion $\bF^{*}$ whose age consists of rigid elements, has the Ramsey property, and has the expansion property relative to $\Age(\bF)$.
\end{enumerate}
\end{cor}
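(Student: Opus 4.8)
The plan is to read Theorem~\ref{t:met1} through the Kechris--Pestov--Todorcevic dictionary~\cite{Kechris2005}: once the relevant objects are identified, condition~(ii) of the corollary and condition~(ii) of Theorem~\ref{t:met1} say the same thing, so the corollary follows formally, the one substantial input being Theorem~\ref{t:met1} itself. The first step is to pin down the correspondence between closed subgroups of $G = \Aut(\bF)$ and Fraïssé expansions of $\bF$. Given a closed subgroup $G^{*} \leq G$, form its \emph{canonical expansion} $\bF^{*}$: adjoin to the language of $\bF$ one new relation symbol for each orbit of $G^{*} \actson \bF^{n}$ ($n \in \omega$; there are only countably many since $\bF$ is countable) and interpret it by that orbit. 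Since $G^{*}$ preserves $\bF$, the structure $\bF^{*}$ is an expansion of $\bF$ in a countable relational language, and since every partial isomorphism of $\bF^{*}$ between finite substructures preserves all orbits and hence extends to an element of $G^{*}$, $\bF^{*}$ is ultrahomogeneous --- a Fraïssé structure --- with $\Aut(\bF^{*}) = G^{*}$. Conversely every Fraïssé expansion $\bF^{*}$ of $\bF$ gives back the closed subgroup $\Aut(\bF^{*}) \leq G$.

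Under this correspondence three translations are available, all already present in the literature. (a)~By the discussion in the introduction, $\widehat{G/G^{*}}$ is canonically identified, as a $G$-flow, with the space $X_{\bF^{*}}$ of all expansions of $\bF$ to the language of $\bF^{*}$ whose age is contained in $\Age(\bF^{*})$; this space is compact --- equivalently, $G^{*}$ is co-precompact in $G$ --- precisely when each finite substructure of $\bF$ has only finitely many expansions in $\Age(\bF^{*})$, i.e., exactly when $\bF^{*}$ is a precompact expansion of $\bF$. (b)~By the Kechris--Pestov--Todorcevic theorem~\cite{Kechris2005}, $G^{*} = \Aut(\bF^{*})$ is extremely amenable if and only if $\Age(\bF^{*})$ consists of rigid structures and has the Ramsey property. (c)~By~\cite{Kechris2005}, and in the precompact generality needed here by~\cite{NguyenVanThe2013}, when $\bF^{*}$ is a precompact expansion the $G$-flow $\widehat{G/G^{*}} \cong X_{\bF^{*}}$ is minimal if and only if $\bF^{*}$ has the expansion property relative to $\Age(\bF)$.

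Granting (a)--(c), both implications are bookkeeping. For~(i)~$\Rightarrow$~(ii): condition~(i) of the corollary is word for word condition~(i) of Theorem~\ref{t:met1}, so that theorem supplies a closed, co-precompact, extremely amenable $G^{*}$ with $M(G) = \widehat{G/G^{*}}$; applying the canonical-expansion construction and (a)--(c) to this $G^{*}$, and using that $M(G)$ is minimal, one reads off that $\bF^{*}$ is a precompact Fraïssé expansion of $\bF$ with rigid Ramsey age and with the expansion property relative to $\Age(\bF)$. For~(ii)~$\Rightarrow$~(i): put $G^{*} = \Aut(\bF^{*})$; by~(b) it is extremely amenable, by~(a) it is co-precompact and hence $\widehat{G/G^{*}}$ is a compact $G$-flow, this flow is universal because extreme amenability of $G^{*}$ forces every $G$-flow to have a $G^{*}$-fixed point, and by~(c) it is minimal, so $\widehat{G/G^{*}} = M(G)$; thus $G^{*}$ witnesses condition~(ii) of Theorem~\ref{t:met1}, whence condition~(i) of Theorem~\ref{t:met1} holds, and it coincides with condition~(i) of the corollary.

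The step I expect to demand the most care is part~(a): checking that an \emph{arbitrary} closed co-precompact subgroup $G^{*}$, supplied only abstractly by Theorem~\ref{t:met1}, does correspond through its canonical expansion to a genuinely \emph{precompact} expansion in the combinatorial sense --- each finite substructure of $\bF$ carrying only finitely many expansions in $\Age(\bF^{*})$ --- and that the identification of $\widehat{G/G^{*}}$ with $X_{\bF^{*}}$ is one of $G$-flows. The remaining ingredients are either quoted directly from~\cite{Kechris2005} and~\cite{NguyenVanThe2013} or are the soft verifications attached to the canonical-structure construction.
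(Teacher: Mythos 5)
Your proposal is correct and follows essentially the same route as the paper, which states the corollary as a direct translation of Theorem~\ref{t:met1} through the Kechris--Pestov--Todorcevic correspondence and defers the details (precompact expansions, the equivalence of extreme amenability with rigidity plus the Ramsey property, and of minimality with the expansion property) to \cite{Kechris2005} and \cite{NguyenVanThe2013}. Your explicit treatment of the canonical expansion associated to an abstract closed co-precompact subgroup, and of the $G$-flow identification $\widehat{G/G^{*}} \cong X_{\bF^{*}}$, is exactly the bookkeeping the paper leaves implicit.
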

See \cite{NguyenVanThe2013} for details and for the definition of the Ramsey and the expansion property. 

It is natural to ask whether the assumption of the existence of the $G_{\delta}$ orbit can be omitted from item \ref{met1:i:1} of Theorem~\ref{t:met1}. In fact, this question had already been raised by Angel, Kechris, and Lyons in \cite{Angel2012}:
\begin{question}
\label{q:metrizable-generic}
  If the universal minimal flow $M(G)$ of a Polish group $G$ is metrizable, does $M(G)$ necessarily have a $G_\delta$ orbit?
\end{question}
  While this paper was being completed, Andy Zucker informed us that he had proved \cite{Zucker2014p}, in the important case where $G$ is a subgroup of $S_\infty$, a result stronger than Theorem~\ref{t:met1} in which the hypothesis of the existence of a $G_\delta$ orbit can be omitted from \ref{met1:i:1}, thus answering Question~\ref{q:metrizable-generic} in this case. His work was independent from ours and his methods are quite different. Question~\ref{q:metrizable-generic} remains open in general.

  Zucker's result also implies that Question~\ref{q:olig-Ramsey} is equivalent to the question whether every oligomorphic permutation group has a metrizable universal minimal flow. It was shown in \cite{Tsankov2012} that oligomorphic permutation groups are (more or less) the Roelcke precompact subgroups of $S_\infty$. (A topological group $G$ is \df{Roelcke precompact} if for every neighborhood $U$ of $1_G$, there is a finite set $F \sub G$ such that $UFU = G$.) It seems plausible that Roelcke precompactness alone may imply the metrizability of the universal minimal flow. 
\begin{question}
\label{q:Roelke-metrizable}
Is $M(G)$ metrizable for every Roelcke precompact Polish group $G$?
\end{question}

A positive answer to Questions \ref{q:olig-Ramsey} and/or \ref{q:Roelke-metrizable} would show that Ramsey classes are rather ubiquitous objects and not at all exceptional, as was initially believed. Since \cite{Kechris2005}, a number of new precompact Ramsey expansions have been found for various kinds of Fraïssé classes: metric spaces with rational distances \cite{Nesetril2005}, all classes of posets (and essentially all classes of undirected graphs) \cites{Sokic2012, Sokic2012a}, $\omega$-categorical linear orders \cite{Dorais2013}, boron trees \cite{Jasinski2013}, and all classes of directed graphs \cite{Jasinski2014}. Those results could be considered as evidence for a positive answer to Question~\ref{q:olig-Ramsey}. On the other hand, several natural problems about the Ramsey property remain open: finite metric spaces with distances in some fixed set, Euclidean metric spaces (this problem is mentioned in \cite{Kechris2005}), projective Fraïssé classes (those are developed in \cite{Irwin2006} and are connected to Fraïssé classes of finite Boolean algebras) and equidistributed Boolean algebras (this problem appears in \cite{Kechris2012p}).

Flows of the type $\widehat{G/H}$ are usually not difficult to understand and when it happens that $M(G) = \widehat{G/G^*}$ for some closed, co-precompact $G^* \leq G$, this provides ample information for the dynamical properties of all minimal $G$-flows. Recall that a $G$-flow $X$ is called \df{coalescent} if every endomorphism of $X$ is an automorphism. The universal minimal flow $M(G)$ is always coalescent (this fact is due to Ellis; for a proof, see for example \cite{Uspenskij2000}*{Proposition 3.3}) but if $M(G) = \widehat{G/G^*}$, much more is true.
\begin{theorem}
  \label{th:intro-coalescent}
  Let $G$ be a Polish group. Then the following statements hold:
  \begin{enumerate}
  \item \label{i:6} Every minimal $G$-flow of the form $\widehat{G/H}$ is coalescent and has a compact automorphism group;
  \item If $M(G) = \widehat{G/G^*}$ for some closed, co-precompact $G^* \leq G$, then the conclusion of \ref{i:6} is true for every minimal $G$-flow.
  \end{enumerate}
\end{theorem}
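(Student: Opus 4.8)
The plan is to derive both statements from a single observation: for a flow of the form $X=\widehat{G/H}$, every endomorphism of $X$ is \emph{non-expansive} for the canonical uniformity of $X$, namely the one (unique, since $X$ is compact) having as a base the closures $\overline{U_V}$ in $X\times X$ of the sets $U_V$ from \eqref{eq:unif}. To see this, let $\phi$ be an endomorphism of $X$, put $x_0=H\in G/H\sub X$, and let $y_0=\phi(x_0)$; since $\phi$ is $G$-equivariant and $x_0$ is fixed by $H$, so is $y_0$. If $(g_1H,g_2H)\in U_V$, i.e.\ $g_2=vg_1h$ with $v\in V$ and $h\in H$, then $\phi(g_2H)=g_2\cdot y_0=vg_1\cdot y_0=v\cdot\phi(g_1H)$, and since $(z,v\cdot z)\in\overline{U_V}$ for every $z\in X$ (by density of $G/H$ in $X$), we get $(\phi(g_1H),\phi(g_2H))\in\overline{U_V}$; the general implication $(\xi,\eta)\in\overline{U_V}\Rightarrow(\phi\xi,\phi\eta)\in\overline{U_V}$ follows by density of $(G/H)^2$ in $X\times X$ together with continuity of $\phi$. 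In particular the family of all endomorphisms of $X$ is uniformly equicontinuous, hence relatively compact in $C(X,X)$ by Arzel\`a--Ascoli.

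Granting this, \ref{i:6} is a soft argument. Let $X=\widehat{G/H}$ be minimal and $\phi$ an endomorphism. Its image is a subflow, hence $\phi$ is onto, and so is each power $\phi^{n}$; thus $\overline{\set{\phi^{n}:n\geq 1}}$ is a compact semigroup (under composition) of onto non-expansive maps, and by the Ellis--Numakura lemma it contains an idempotent $\sigma$; but $\sigma$ is onto (a limit of onto maps) and $\sigma\circ\sigma=\sigma$, which forces $\sigma=\mathrm{id}_X$. So $\mathrm{id}_X\in\overline{\set{\phi^{n}:n\geq 1}}$; choosing a net $\phi^{n_\lambda}\to\mathrm{id}_X$ with $n_\lambda\to\infty$ and a convergent subnet $\phi^{n_\lambda-1}\to\psi$, one gets $\phi\circ\psi=\psi\circ\phi=\mathrm{id}_X$, so $\phi\in\Aut(X)$: $X$ is coalescent. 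Moreover $\Aut(X)$ is uniformly equicontinuous (each of its elements is an endomorphism, hence non-expansive), hence relatively compact in $C(X,X)$; and it is closed there, since if $\alpha_i\to\beta$ with $\alpha_i\in\Aut(X)$ then $\beta$ is $G$-equivariant and, passing to a subnet along which $\alpha_i^{-1}\to\gamma$, one has $\beta\gamma=\gamma\beta=\mathrm{id}_X$, so $\beta\in\Aut(X)$. Hence $\Aut(X)$ is compact.

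For the second statement, put $M=M(G)=\widehat{G/G^{*}}$; by \ref{i:6} applied with $H=G^{*}$, $\Aut(M)$ is compact and $M$ is coalescent. Fix a minimal $G$-flow $Y$ and a factor map $\rho\colon M\to Y$, and set $\ker\rho=\set{(m,m'):\rho(m)=\rho(m')}$, a closed $G$-invariant relation. Given an endomorphism $\phi$ of $Y$ (onto, by minimality), a short diagram chase lifts it: take a minimal subflow $N$ of $\set{(m,m')\in M\times M:\rho(m)=\phi(\rho(m'))}$ (nonempty since $\rho$ is onto), note both projections map $N$ onto $M$, choose a factor map $\gamma\colon M\to N$ (universality of $M$), and put $\tilde\phi=(\mathrm{pr}_1\circ\gamma)\circ(\mathrm{pr}_2\circ\gamma)^{-1}$ — this makes sense because $\mathrm{pr}_2\circ\gamma$ is an endomorphism of $M$, hence an automorphism by coalescence. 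Then $\tilde\phi$ is an endomorphism of $M$, hence an automorphism, and $\rho\circ\tilde\phi=\phi\circ\rho$. This identity gives $(\tilde\phi\times\tilde\phi)(\ker\rho)\sub\ker\rho$, so $(\tilde\phi^{n}\times\tilde\phi^{n})(\ker\rho)\sub\ker\rho$ for all $n\geq 0$. Since $\Aut(M)$ is compact, $\mathrm{id}_M$ lies in the closure of $\set{\tilde\phi^{n}:n\geq 1}$ (a compact semigroup has an idempotent, which in a group is the identity), so $\tilde\phi^{-1}$ is a limit of powers $\tilde\phi^{n_k}$ with $n_k\to\infty$; as $\ker\rho$ is closed and $(\tilde\phi^{n_k}\times\tilde\phi^{n_k})(\ker\rho)\sub\ker\rho$, passing to the limit gives $(\tilde\phi^{-1}\times\tilde\phi^{-1})(\ker\rho)\sub\ker\rho$ as well, whence $(\tilde\phi\times\tilde\phi)(\ker\rho)=\ker\rho$. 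Therefore $\phi(\rho(m))=\phi(\rho(m'))$ forces $(\tilde\phi m,\tilde\phi m')\in\ker\rho$, hence $(m,m')\in\ker\rho$, i.e.\ $\rho(m)=\rho(m')$: so $\phi$ is injective, hence an automorphism, and $Y$ is coalescent. Running the same lift on automorphisms of $Y$ exhibits $\Aut(Y)$ as a continuous homomorphic image of the closed subgroup $\set{\alpha\in\Aut(M):(\alpha\times\alpha)(\ker\rho)=\ker\rho}$ of the compact group $\Aut(M)$; hence $\Aut(Y)$ is compact.

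The delicate point is the second statement, and precisely the step from $(\tilde\phi\times\tilde\phi)(\ker\rho)\sub\ker\rho$ to equality: a lift to $M$ of an endomorphism of $Y$ is automatically an automorphism of $M$, but nothing a priori prevents it from collapsing $\ker\rho$ strictly; it is the compactness of $\Aut(M)$ — available only because of \ref{i:6} applied to $M=\widehat{G/G^{*}}$ — that rules this out. The remaining pieces (the non-expansiveness computation, the Arzel\`a--Ascoli plus idempotent argument, and the diagram chase) are routine.
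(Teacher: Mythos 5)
Your argument is correct, and it takes a genuinely different route from the paper's, especially for the second statement. For \ref{i:6}, your non-expansiveness computation is essentially the contraction estimate in the paper's Lemma~\ref{l:homog-Gmap} (there phrased metrically as $d(\phi(g_1H),\phi(g_2H))\leq d(g_1H,g_2H)$), but you finish with Arzel\`a--Ascoli plus an Ellis--Numakura idempotent, whereas the paper invokes the fact that a surjective non-expansive self-map of a compact metric space is an isometry; that shortcut gives injectivity at once and, combined with the classification of $G$-maps of $G/H$ as right translations by elements of $N(H)$, identifies $\Aut(\widehat{G/H})$ concretely as the compact group $N(H)/H$. For the second statement the paper transfers a $G_\delta$ orbit from $M(G)$ to an arbitrary minimal flow $Y$ (Lemma~\ref{l:Gdelta-orbit}, which rests on an external result about $G_\delta$ orbits under factor maps, together with Effros's theorem), notes that the stabilizer contains $G^*$ and is therefore co-precompact, and reruns the homogeneous-space analysis inside $Y$ itself (Theorem~\ref{th:coalescent}); you never touch the orbit structure of $Y$, instead lifting an endomorphism of $Y$ to an automorphism of $M(G)$ through a minimal subflow of the graph relation and using compactness of $\Aut(M(G))$ to upgrade $(\tilde\phi\times\tilde\phi)(\ker\rho)\subseteq\ker\rho$ to an equality --- which is in fact the same mechanism the paper uses at the end of Theorem~\ref{th:coalescent} to upgrade \eqref{eq:theta-end} to an equivalence. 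The paper's route buys the finer conclusion that $\Aut(Y)$ embeds as a closed subgroup of $N(H)/H$ with $H$ the stabilizer of a point in the $G_\delta$ orbit of $Y$; yours buys independence from the $G_\delta$-orbit and Baire-category machinery in the second part, at the price of describing $\Aut(Y)$ only as a continuous quotient of a closed subgroup of $\Aut(M(G))$. If you write this up, make two small steps explicit: dispose of the degenerate case $\id=\phi^{n}$ before choosing the net $\phi^{n_\lambda}\to\id$ with $n_\lambda\to\infty$, and verify that the descent map $\alpha\mapsto\bar\alpha$ from $\set{\alpha\in\Aut(M(G)):(\alpha\times\alpha)(\ker\rho)=\ker\rho}$ onto $\Aut(Y)$ is continuous for uniform convergence (this follows from surjectivity and uniform continuity of $\rho$).
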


Another consequence is the fact that if $M(G) = \widehat{G/G^*}$, then minimal flows of $G$ are easy to classify, at least in the sense of descriptive set theory: in Subsection~\ref{sec:smooth-isom}, we show that the equivalence relation of isomorphism of minimal flows of $G$ is \emph{smooth}.

Next we show how to calculate the \emph{universal minimal proximal flow} of groups for which $M(G)$ is of the form $\widehat{G/G^*}$ and isolate a criterion for such a group $G$ to be \emph{strongly amenable} (see Section~\ref{sec:proximal-flows} for the definitions).
\begin{theorem}
  \label{th:univ-prox}
  Let $G$ be a Polish group and $G^*$ a closed, co-precompact subgroup such that $M(G) = \widehat{G/G^*}$. Let $N(G^*)$ denote the normalizer of $G^*$ in $G$. Then the following statements hold:
  \begin{enumerate}
  \item The universal minimal proximal flow of $G$ is isomorphic to $\widehat{G/N(G^*)}$;
  \item $G$ is strongly amenable iff $G^*$ is normal in $G$.
  \end{enumerate}
\end{theorem}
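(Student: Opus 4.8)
The plan is to analyze minimal proximal flows through the lens of the already-established identification $M(G) = \widehat{G/G^*}$ and the universal property it carries. Recall that a flow $X$ is proximal if every pair $(x,y) \in X \times X$ is proximal, i.e. lies in the closure of a $G$-orbit of a point on the diagonal, and that the universal minimal proximal flow $\Pi(G)$ is the largest minimal proximal factor of $M(G)$. So the first step is to show that $\widehat{G/N(G^*)}$ is a proximal $G$-flow. Since it is a factor of $M(G) = \widehat{G/G^*}$ via the canonical map $gG^* \mapsto gN(G^*)$ (this map is well-defined and uniformly continuous, extending to the completions), and since it is of the form $\widehat{G/H}$ with $H = N(G^*)$ co-precompact (as $G^* \leq N(G^*)$ and $G^*$ is co-precompact) and minimal, it suffices to verify proximality. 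Here the key computation is that two points $g_1 N(G^*)$, $g_2 N(G^*)$ of the dense orbit are proximal: one pushes them toward each other using the extreme amenability of $G^*$, exploiting that $G^*$ fixes the base point $N(G^*)$ in $\widehat{G/N(G^*)}$ but, crucially, also fixes $\overline{g N(G^*)}$-type limits because $g^{-1} G^* g$ and $G^*$ both sit inside $N(G^*)$. More precisely, given $\varepsilon$ and $g$, extreme amenability of $G^*$ applied to the flow $\widehat{G/N(G^*)}$ yields a net $(h_i) \subseteq G^*$ with $h_i \cdot gN(G^*) \to N(G^*)$; applying the same to the pair shows the two points are brought within $\varepsilon$ simultaneously, and a compactness/diagonal argument upgrades this to genuine proximality on all of $\widehat{G/N(G^*)} \times \widehat{G/N(G^*)}$.

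The second step is the universality: every minimal proximal $G$-flow $X$ is a factor of $\widehat{G/N(G^*)}$. Since $X$ is in particular a minimal flow, $M(G) = \widehat{G/G^*}$ maps onto it; fix the base point $x_0 \in X$ that is the image of $G^* \in \widehat{G/G^*}$, so that $G^*$ fixes $x_0$ (here we use that $G^*$ is extremely amenable, hence any orbit map from a flow on which it has a fixed point — and $G^* \cdot x_0$ being a single point follows from the morphism sending $G^*$ to $x_0$ and $G^*$-invariance of $\{G^*\}$ in $M(G)$). The claim is that in fact all of $N(G^*)$ fixes $x_0$. This is where proximality of $X$ enters decisively: if $n \in N(G^*)$ then $n \cdot x_0$ is fixed by $n G^* n^{-1} = G^*$, so both $x_0$ and $n \cdot x_0$ are fixed by $G^*$; by extreme amenability $G^*$ fixes a point in $\overline{G \cdot (x_0, n \cdot x_0)}$, and since $X$ is proximal this orbit closure meets the diagonal — combined with the fact that two $G^*$-fixed points that are proximal must be equal (push them together by $G^*$, they don't move), we get $n \cdot x_0 = x_0$. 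Hence the orbit map factors through $G/N(G^*)$ and, by uniform continuity, through $\widehat{G/N(G^*)}$. Together with the first step, this establishes (i).

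For (ii), note that $G$ is strongly amenable precisely when its universal minimal proximal flow is trivial, i.e. when $\widehat{G/N(G^*)}$ is a single point, which by minimality and density of $G/N(G^*)$ happens iff $N(G^*) = G$, i.e. iff $G^*$ is normal in $G$.

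The main obstacle I expect is the proximality verification in the first step: controlling limits in $\widehat{G/N(G^*)}$ and showing proximality holds not merely for points of the dense orbit but for \emph{all} pairs in the completion. The clean way around this is the standard reduction that a minimal flow is proximal iff it has a dense set of proximal pairs through each point, or better, iff the unique minimal subset of the product flow $X \times X$ meets the diagonal; combined with minimality of $\widehat{G/N(G^*)}$ (which follows from co-precompactness and the universal property), one only needs proximality of pairs from the dense orbit, where the extreme-amenability argument applies directly. A secondary technical point is ensuring the map $\widehat{G/G^*} \to \widehat{G/N(G^*)}$ is a morphism of \emph{flows} and not just continuous — but this is immediate from uniform continuity of $gG^* \mapsto gN(G^*)$ with respect to the quotient uniformities, which is the same kind of estimate already used in the excerpt for the universal property of $\widehat{G/H}$.
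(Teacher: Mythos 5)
Your overall strategy (show $\widehat{G/N(G^*)}$ is a minimal proximal flow, then show every minimal proximal flow is a factor of it by proving that all of $N(G^*)$ fixes the image of the base point) is a reasonable route, and differs from the paper's, which first proves in general that $\Pi(G)$ is the maximal $\Aut(M(G))$-invariant factor of $M(G)$ and then uses $\Aut(M(G)) \cong N(G^*)/G^*$. However, your universality step has a genuine gap. The key claim you invoke is that ``two $G^*$-fixed points that are proximal must be equal (push them together by $G^*$, they don't move).'' This is false as justified: proximality of $a$ and $b$ only provides a net $(g_\alpha)$ in $G$ --- not in $G^*$ --- with $g_\alpha \cdot a$ and $g_\alpha \cdot b$ converging to the same point, so the $G^*$-fixedness of $a$ and $b$ gives you nothing about that net. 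Worse, the fact you actually need (that a minimal proximal factor of $M(G)$ has a \emph{unique} $G^*$-fixed point) is essentially equivalent to the statement you are proving: by Lemma~\ref{l:fixed-pts} and extreme amenability, the $G^*$-fixed points of such a factor are exactly the $N(G^*)$-orbit of the image of the base point, so ``all $G^*$-fixed points coincide'' \emph{is} the assertion ``$N(G^*)$ fixes the base point.'' To close the gap one must use that $n \in N(G^*)$ induces an automorphism $\theta_n$ of $M(G)$ commuting with the $G$-action, lift the proximality net for $(\pi(x_0), \pi(\theta_n(x_0)))$ to $M(G)$, pass to a convergent subnet, use the equivariance of $\theta_n$ to see that the limits upstairs form a pair $(z, \theta_n(z))$ with equal images, and then transport back by minimality --- this is exactly the paper's argument in Theorem~\ref{th:univ-prox-flow}, and it is not a cosmetic detail.

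A secondary point: in your proximality verification the justification ``$g^{-1}G^*g$ and $G^*$ both sit inside $N(G^*)$'' is wrong for general $g \in G$ (conjugates of $G^*$ by arbitrary elements need not normalize $G^*$). The step itself is salvageable: by Lemma~\ref{l:fixed-pts} and extreme amenability of $G^*$ (which holds by Proposition~\ref{p:ext-am}), the base point is the \emph{unique} $G^*$-fixed point of $\widehat{G/N(G^*)}$, and $\overline{G^* \cdot (z_1,z_2)}$ contains a $G^*$-fixed point for any pair, which must then be the doubled base point; this gives proximality of all pairs at once, with no separate density or diagonal argument needed. Part (ii) follows correctly from (i) as you say.
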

In the case of automorphism groups of Fraïssé structures, Theorem~\ref{th:univ-prox} can be used to characterize strong amenability in Ramsey-theoretic terms. 

The paper is organized as follows. In Section~\ref{sec:minimal-flows-form}, we study minimal flows with a $G_\delta$ orbit and prove Theorem~\ref{th:intro-coalescent}; in Section~\ref{sec:metrizable-flows}, we prove Theorem~\ref{t:met1}; in Section~\ref{sec:proximal-flows}, we discuss proximal flows and prove Theorem~\ref{th:univ-prox}; and, finally, in Section~\ref{sec:examples}, we calculate the universal minimal proximal flow for several examples.

\subsection*{Acknowledgements}
L.N.V.T. would like to acknowledge the support of the CNRS and the hospitality of the Institut Camille Jordan (Universit\'e Lyon 1), and to thank Bohuslav Balcar for pointing out the notion of strong amenability, as well as Gregory Cherlin, Arnaud Hilion, and Mauro Mariani for helpful discussions. T.T. would like to thank the Caltech mathematics department, during a visit to which part of this work was done as well as Alexander Kechris for useful discussions. The last stage of the project was completed during the program on Universality and Homogeneity held at the Hausdorff Research Institute for Mathematics in Bonn, and we would like to thank the Institute and the organizers of the program for having made this possible. We are also grateful to the anonymous referees for catching some imprecisions as well as making suggestions that helped us improve the exposition.

%%%%%%%%%%%%%%%%%%%%%%%%%%%%%%%%%%%%%%%%%%%%%%%%%%

\section{Coalescence and automorphisms of minimal flows}
\label{sec:minimal-flows-form}
In this section, we study endomorphisms of minimal flows that have a $G_\delta$ orbit with a co-precompact stabilizer. Throughout $G$ will be a Polish group.

We start with the following well-known fact.
\begin{lemma}
  \label{l:Gdelta-orbit}
  Let $G \actson X$ be a metrizable minimal flow with a $G_\delta$ orbit $G \cdot x_0$, $Y$ be a minimal $G$-flow, and let $\pi \colon X \to Y$ be a factor map. Then the orbit $G \cdot \pi(x_0)$ is also $G_\delta$. In particular, if $\pi \colon X \to X$ is an endomorphism, then $\pi(G \cdot x_0) = G \cdot x_0$.
\end{lemma}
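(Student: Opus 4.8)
The plan is to exploit the Baire category / descriptive set-theoretic content of the word ``$G_\delta$ orbit'' in a minimal flow. First I would recall the standard dichotomy: in a minimal metrizable flow $X$, an orbit $G\cdot x$ is either meager or comeager (it cannot be both proper and comeager unless... actually it is either $G_\delta$ comeager, in which case it is unique, or meager). Since $X$ is minimal and metrizable, it is a Polish $G$-space, and by the topological Vaught/Effros-type analysis, an orbit is $G_\delta$ if and only if it is non-meager (equivalently somewhere non-meager), and in that case it is comeager and is the \emph{unique} non-meager orbit. So the hypothesis that $G\cdot x_0$ is $G_\delta$ is equivalent to saying $G\cdot x_0$ is the unique comeager orbit of $X$.

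Next I would push this through the factor map $\pi\colon X\to Y$. The key point is that $\pi$ is a continuous, surjective, $G$-equivariant map between Polish $G$-spaces, and $Y$ is also minimal. I want to show $G\cdot\pi(x_0)$ is comeager in $Y$. Since $G\cdot x_0$ is comeager in $X$ and $\pi$ is continuous and surjective, $\pi(G\cdot x_0)=G\cdot\pi(x_0)$ is an analytic (hence Baire-measurable) subset of $Y$; moreover its complement $\pi(X\setminus G\cdot x_0)$ is the image of a meager set. The subtle step is ensuring that the image of the meager ``bad'' part does not swamp $Y$. Here I would use minimality of $Y$ together with equivariance: the set $Y\setminus G\cdot\pi(x_0)$ is $G$-invariant, and if it were non-meager it would be comeager (again by the dichotomy for the Polish $G$-space $Y$), and then by a Kuratowski–Ulam / Baire-category argument applied to the fibers of $\pi$ — or more cleanly, by pulling back, $\pi^{-1}(Y\setminus G\cdot\pi(x_0))$ would be a $G$-invariant set disjoint from $G\cdot x_0$, hence contained in the meager set $X\setminus G\cdot x_0$, hence meager in $X$; but a continuous open or at least category-preserving preimage of a comeager set should be comeager, giving a contradiction. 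Once $G\cdot\pi(x_0)$ is comeager in $Y$, the dichotomy hands us that it is $G_\delta$.

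For the ``in particular'' clause, take $\pi\colon X\to X$ an endomorphism (so $Y=X$ and $\pi$ is a factor map since $X$ is minimal, any endomorphism of a minimal flow is surjective). By the first part, $G\cdot\pi(x_0)$ is $G_\delta$, hence comeager, hence it must \emph{equal} the unique comeager orbit $G\cdot x_0$. Thus $\pi(G\cdot x_0)=G\cdot\pi(x_0)=G\cdot x_0$.

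The main obstacle I expect is the middle step: controlling the image under $\pi$ of the meager complement of $G\cdot x_0$, i.e.\ showing the ``good'' orbit stays non-meager downstairs. The cleanest route is probably to avoid forward images entirely and argue with preimages: show directly that $\pi^{-1}(G\cdot\pi(x_0)) \supseteq G\cdot x_0$ is comeager in $X$, and that $\pi$, being a continuous $G$-map between minimal flows, cannot map a comeager set into a meager $G$-invariant set — using that the $\sigma$-algebra of $G$-invariant Baire-measurable sets in $Y$ is trivial (every such set is meager or comeager by the $0$–$1$ law for the minimal, hence topologically transitive, action). If there is a genuinely delicate measurability issue (the image $\pi(G\cdot x_0)$ is a priori only analytic), it is absorbed by the fact that analytic sets have the Baire property, so the meager/comeager dichotomy still applies to $G\cdot\pi(x_0)$.
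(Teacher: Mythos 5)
Your overall strategy is the right one, and is in the spirit of the argument the paper outsources to the cited reference: by Effros's theorem a dense orbit in a Polish $G$-space is $G_\delta$ if and only if it is non-meager, in which case it is the unique comeager orbit; so the entire content of the lemma is to transfer ``has a comeager orbit'' from $X$ down to $Y$, and the ``in particular'' clause then follows exactly as you say (an endomorphism of a minimal flow is surjective, and the image orbit, being comeager, must coincide with the unique comeager orbit $G\cdot x_0$). Your reduction --- if $Y\setminus G\cdot\pi(x_0)$ were comeager, its preimage would be a comeager set contained in the meager set $X\setminus G\cdot x_0$ --- is also the right reduction.

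The genuine gap is the step you yourself flag: ``a continuous open or at least category-preserving preimage of a comeager set should be comeager.'' A factor map between compact metric spaces need not be open, and for a general continuous surjection the preimage of a comeager set can perfectly well be meager (equivalently, a meager subset of $Y$ can have non-meager preimage), so ``category-preserving'' is precisely the property you need and precisely the property you never establish. Kuratowski--Ulam does not apply because $\pi$ is not a coordinate projection, and the $0$--$1$ law on $Y$ says nothing about how category behaves under $\pi$. The missing ingredient is the standard fact that a factor map $\pi\colon X\to Y$ with $X$ a minimal compact $G$-flow is \emph{semi-open}: given a nonempty open $U\subseteq X$, minimality and compactness yield $g_1,\dots,g_n\in G$ with $X=\bigcup_i g_i\cdot U$, hence $Y=\bigcup_i g_i\cdot\pi(U)$ is a finite union of closed sets, and the Baire category theorem forces $\pi(U)$ to have nonempty interior. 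Semi-openness implies that preimages of dense sets are dense, hence that preimages of dense open sets are dense open and preimages of comeager sets are comeager, which closes your argument. Without some such input, the proof does not go through as written.
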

\begin{proof}
  The first assertion follows, for example, from \cite{Melleray2013}*{Proposition~A.7}. The second is a consequence of the Baire category theorem.
\end{proof}

Recall that if $H \leq G$ is a closed subgroup, the homogeneous space $G/H$ is equipped with a natural distance defined by \eqref{eq:dist}. The next lemma shows that under a precompactness assumption, all endomorphisms of the homogeneous $G$-space $G/H$ are in fact isometries for this distance.
\begin{lemma}
  \label{l:homog-Gmap}
  Let $G$ be a Polish group and $H \leq G$ a closed, co-precompact subgroup. Suppose $\phi \colon G/H \to G/H$ is a $G$-map. Then there exists $f_0 \in N(H)$ such that $\phi(gH) = gf_0H$ for all $g \in G$. In particular, $\phi$ is an isometry.
\end{lemma}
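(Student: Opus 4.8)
The plan is to reduce the statement to the single assertion that $\phi$ is injective, and then to extract injectivity from a compactness argument in which the co‑precompactness of $H$ plays the essential role.

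First I would observe that, by equivariance, $\phi$ is determined by one value: choosing $f_0 \in G$ with $\phi(H) = f_0H$, we get $\phi(gH) = g\phi(H) = gf_0H$ for all $g$, and the only constraint on $f_0$ is that this be well defined on cosets, which is exactly $f_0^{-1}Hf_0 \subseteq H$, equivalently $H \subseteq f_0Hf_0^{-1}$. A direct computation with the distance \eqref{eq:dist}, using right‑invariance of $d_R$ and this inclusion, shows that $gH \mapsto gf_0H$ is then $1$‑Lipschitz, and that it is an isometry precisely when $f_0Hf_0^{-1} = H$, i.e. $f_0 \in N(H)$. Also $\phi$ is automatically onto, since $G$ acts transitively on $G/H$. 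Finally, a routine coset computation shows $\phi$ is injective iff $f_0Hf_0^{-1} \subseteq H$, which combined with the inclusion we already have gives $f_0 \in N(H)$ (and then $\phi$ is an isometry, proving the ``in particular''). So everything comes down to proving $\phi$ is injective.

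Since $\phi$ is uniformly continuous and $H$ is co‑precompact, $\phi$ extends to a continuous $G$‑map $\hat\phi$ of the \emph{compact} metric space $X := \widehat{G/H}$, and each iterate $\hat\phi^n$ is again $1$‑Lipschitz and surjective (its image is compact and contains the dense set $G/H$). Hence the family $\{\hat\phi^n : n \geq 1\}$ is equicontinuous, so by Arzelà–Ascoli its closure $S$ in $C(X,X)$ is compact, and since composition is jointly continuous on equicontinuous families, $S$ is a compact topological semigroup. By Ellis's theorem $S$ contains an idempotent $u$; being a limit of surjections, $u$ is surjective, and a surjective idempotent self‑map of a set is the identity, so $\mathrm{id}_X \in S$. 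Thus some net satisfies $\hat\phi^{n_\alpha} \to \mathrm{id}_X$ with $n_\alpha \geq 2$ (the case $\hat\phi^n = \mathrm{id}_X$ being trivial); passing to a subnet along which $\hat\phi^{n_\alpha-1} \to \psi \in S$ and using continuity of composition gives $\hat\phi\circ\psi = \psi\circ\hat\phi = \mathrm{id}_X$. Therefore $\hat\phi$ is a homeomorphism of $X$, in particular injective, hence so is its restriction $\phi$ to $G/H$. The computations of the second paragraph are routine; the heart of the proof is the semigroup argument, and the point to get right is that we genuinely work inside a compact topological semigroup — it is equicontinuity (available only because $X$ is compact, i.e. $H$ co‑precompact) that makes both the closure compact and composition continuous. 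Without co‑precompactness the statement is false, a strict inclusion $f_0^{-1}Hf_0 \subsetneq H$ producing a non‑injective $G$‑map $gH \mapsto gf_0H$; a secondary, purely bookkeeping point is arranging $n_\alpha \to \infty$ and extracting the convergent subnet of $(\hat\phi^{n_\alpha-1})_\alpha$ from compactness of $S$.
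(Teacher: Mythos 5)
Your proof is correct and follows essentially the same route as the paper: the same coset computation showing that $\phi(gH)=gf_0H$ is well defined and $1$-Lipschitz exactly when $f_0^{-1}Hf_0\subseteq H$, followed by extending $\phi$ to the compact completion $\widehat{G/H}$ and deducing injectivity (hence $f_0Hf_0^{-1}\subseteq H$, so $f_0\in N(H)$ and $\phi$ is an isometry) from surjectivity plus compactness. The only divergence is that the paper simply cites the classical fact that a surjective contraction of a compact metric space is an isometry, whereas you reprove the needed injectivity via an enveloping-semigroup/idempotent argument --- a correct, if somewhat heavier, substitute for that citation.
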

\begin{proof}
  Let $f_0 \in G$ be such that $\phi(H) = f_0H$. The set of all $g\in G$ fixing $\phi(H)$ is $f_{0}Hf_{0} ^{-1}$. Furthermore, as $\phi$ is a $G$-map, $\phi(H)$ is fixed by $H$, i.e., $H \sub f_0 H f_0^{-1}$, or, which is the same, $f_{0} ^{-1}H f_{0}\sub H$. For the reverse inclusion, we first check that $\phi$ is a contraction for the distance $d$ defined by \eqref{eq:dist}: for any $g_1, g_2 \in G$ and $h \in H$, we have
\begin{equation*} %\label{eq:2}
  \begin{split}
    d \big(\phi(g_1H), \phi(g_2H) \big) &= d(g_1 f_0 H, g_2 f_0 H) \\
    &= d(g_1 f_0 (f_0^{-1} h f_0) H, g_2 f_0 H) \\
    &\leq d_R(g_1 h f_0, g_2 f_0) \\
    &= d_R(g_1h, g_2),
  \end{split} 
\end{equation*}
  so
  \[
  d \big( \phi(g_1H), \phi(g_2H) \big) \leq \inf_{h \in H} d_R(g_1h, g_2) = d(g_1H, g_2H).  
  \]
  The map $\phi$ extends to a surjective contraction of the compact metric space $\widehat{G/H}$ and is therefore an isometry (see \cite{Engelking1989}*{Exercise 4.5.4, p.~289}). In particular, $\phi$ is injective, so for any $g\in G$, if $g$ fixes $\phi(H)$, then $gH=H$, and $g\in H$. Hence, $f_0 H f_0^{-1}\sub H$, completing the proof.
\end{proof}

Observe that if $H \leq G$ is co-precompact, then $N(H)/H$ is a Polish group whose right uniformity is precompact (as a subspace of the precompact space $G/H$) and therefore compact (see, for instance, \cite{Solecki2002}*{Lemma 1.2}).

If $X$ is a $G$-flow, the \df{automorphism group} of $X$ is the topological group
\[
\Aut(X) = \set{\gamma \in \Homeo(X) : \gamma(g \cdot x) = g \cdot \gamma(x)
  \text{ for all } g \in G, x \in X}.
\]
$\Aut(X)$ is a closed subgroup of $\Homeo(X)$ (the latter being equipped with the uniform convergence topology) and the action $\Aut(X) \actson X$ is continuous. 

\begin{theorem}
  \label{th:coalescent}
Let $G$ be a Polish group, $G \actson X$ be a minimal $G$-flow with a $G_\delta$ orbit $G \cdot x_0$ such that the stabilizer $H = G_{x_0}$ is co-precompact. Then $X$ is coalescent and $\Aut(X)$ embeds naturally as a closed subgroup of the compact group $N(H)/H$. If $X \cong \widehat{G/H}$, then this embedding is an isomorphism.
\end{theorem}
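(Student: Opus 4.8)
The plan is to show that every endomorphism of $X$ lifts to an isometry of $\widehat{G/H}$ and then to invoke the elementary fact that a closed sub-semigroup of a compact group is a subgroup. First, note that $X$ is metrizable: the universal property of $\widehat{G/H}$ recalled in the introduction provides a $G$-map $\pi\colon\widehat{G/H}\to X$ with $\pi(H)=x_0$ whose image is a subflow of $X$, hence all of $X$ by minimality, so $X$ is a continuous Hausdorff image of the compact metrizable space $\widehat{G/H}$. The map $\pi$ restricts to a continuous $G$-equivariant bijection $G/H\to G\cdot x_0$ (injective since $H=G_{x_0}$), and since $G\cdot x_0$ is a $G_\delta$, hence Polish, subspace of $X$ on which $G$ acts transitively, this bijection is in fact a homeomorphism (by the open mapping principle for Polish group actions, as used in the proof of Lemma~\ref{l:Gdelta-orbit}). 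I henceforth identify $G\cdot x_0$ with $G/H$ equipped with the metric $d$ of \eqref{eq:dist}.

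Now let $\psi$ be an endomorphism of $X$; it is surjective by minimality, and by Lemma~\ref{l:Gdelta-orbit} it satisfies $\psi(G\cdot x_0)=G\cdot x_0$. Thus $\psi$ restricts to a surjective $G$-map $G/H\to G/H$, which by Lemma~\ref{l:homog-Gmap} has the form $gH\mapsto gf_0H$ for some $f_0\in N(H)$ and is an isometry of $(G/H,d)$. Being an isometry, it extends uniquely to a surjective isometry $\Psi$ of $\widehat{G/H}$, which is still a $G$-map and hence lies in $\Aut(\widehat{G/H})$, and $\psi\circ\pi=\pi\circ\Psi$ (both sides being continuous and agreeing on the dense set $G/H$). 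The assignment $\psi\mapsto\Psi$ is injective (a continuous self-map of $X$ is determined by its restriction to the dense orbit) and multiplicative, and an element $\Psi$ of $\Aut(\widehat{G/H})$ lies in its image precisely when $\Psi$ maps the closed fibre relation $R_\pi=\set{(a,b) : \pi(a)=\pi(b)}$ into itself: in that case $\pi\circ\Psi$ factors through the quotient map $\pi$ and produces an endomorphism of $X$ lifting to $\Psi$. So the endomorphism semigroup of $X$ is isomorphic to $S:=\set{\Psi\in\Aut(\widehat{G/H}) : (\Psi\times\Psi)(R_\pi)\sub R_\pi}$.

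The group $\Aut(\widehat{G/H})$ consists of isometries of the compact metric space $\widehat{G/H}$ (an automorphism of this flow preserves the $G_\delta$ orbit $G/H$ by Lemma~\ref{l:Gdelta-orbit}, hence restricts to a $G$-map, hence to an isometry by Lemma~\ref{l:homog-Gmap}); since the group of all isometries of a compact metric space is compact by Arzelà--Ascoli, $\Aut(\widehat{G/H})$ is a closed subgroup of a compact group and so is compact. Moreover, combining Lemma~\ref{l:homog-Gmap} with the remark preceding the theorem, $f_0H\mapsto(gH\mapsto gf_0H)$ is an isomorphism of topological groups between $N(H)/H$ and $\Aut(\widehat{G/H})$: it is a bijection, and its continuity follows because the maps involved form an equicontinuous family while $f_0H\mapsto gf_0H$ is continuous for each fixed $g$, so continuity on the dense set $G/H$ upgrades to uniform convergence, and a continuous bijection of compact Hausdorff groups is an isomorphism. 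The set $S$ is closed in $\Aut(\widehat{G/H})$ (a limit of maps respecting the closed relation $R_\pi$ respects $R_\pi$) and closed under composition, so it is a closed sub-semigroup of a compact group and therefore a subgroup. Hence the endomorphism semigroup of $X$ is a group: every endomorphism admits a two-sided inverse which is again an endomorphism and thus a homeomorphism, so $X$ is coalescent and $\Aut(X)$ coincides with the endomorphism semigroup. Under the isomorphisms above, $\Aut(X)$ is carried bijectively onto the closed subgroup $S\leq N(H)/H$; this identification is a homeomorphism, since $\gamma\mapsto\gamma(x_0)$ is continuous (the subspace topology on $G\cdot x_0$ being that of $d$) while, conversely, $\Psi_n\to\Psi$ uniformly together with the uniform continuity of $\pi$ forces the corresponding endomorphisms of $X$ to converge uniformly. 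Finally, when $X\cong\widehat{G/H}$ the map $\pi$ is a homeomorphism, $R_\pi$ is trivial, so $S=\Aut(\widehat{G/H})$ and the embedding is onto $N(H)/H$.

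The step I expect to be the main obstacle is identifying the image of $\psi\mapsto\Psi$ with exactly the set of $G$-automorphisms of $\widehat{G/H}$ preserving the fibre relation $R_\pi$: once this is established, both coalescence and the compact-group embedding follow formally from the fact that a closed sub-semigroup of a compact group is a subgroup. The remaining work — the topological isomorphism $N(H)/H\cong\Aut(\widehat{G/H})$ and the promotion of the abstract isomorphism $\Aut(X)\cong S$ to a topological one — is routine but hinges on the equicontinuity of families of isometries.
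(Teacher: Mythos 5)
Your proof is correct and takes essentially the same route as the paper: lift each endomorphism of $X$ through $\pi\colon\widehat{G/H}\to X$ via Lemmas~\ref{l:Gdelta-orbit} and \ref{l:homog-Gmap}, identify $\Aut(\widehat{G/H})$ with the compact group $N(H)/H$, and deduce invertibility from compactness --- your appeal to the fact that a closed subsemigroup of a compact group is a subgroup is precisely what the paper proves inline by choosing $n_k$ with $\theta^{n_k}\to\mathrm{id}$ to invert the lifted map on the fibre relation. One cosmetic slip: $f_0H\mapsto(gH\mapsto gf_0H)$ is an anti-homomorphism of $N(H)/H$ into $\Aut(\widehat{G/H})$, which is why the paper defines $\theta_f(g\cdot y_0)=gf^{-1}\cdot y_0$; composing with inversion fixes this and changes nothing else.
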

\begin{proof}
  First consider the \df{ambit} (i.e., flow with a distinguished point with a dense orbit) $(Y, y_0) = (\widehat{G/H}, H)$. For $f \in N(H)$, define $\theta_f \colon G \cdot y_0 \to G \cdot y_0$ by $\theta_f(g \cdot y_0) = gf^{-1} \cdot y_0$. This is an isometry of $G/H$ (with the distance \eqref{eq:dist}) that commutes with the action of $G$ and therefore extends to an automorphism $Y \to Y$, still denoted by $\theta_f$. The map $\Phi \colon N(H) \to \Aut(Y)$, $\Phi(f) = \theta_f$ is a homomorphism whose kernel is $H$. By Lemma~\ref{l:homog-Gmap}, $\Aut(Y)$ is a subgroup of the isometry group $\Iso(Y)$, where the pointwise convergence and uniform convergence topologies coincide (see \cite{Pestov2006}*{Proposition 5.2.1}). The map $\Phi$, being obviously continuous for the former, is therefore also continuous for the latter. Lemma~\ref{l:homog-Gmap} also implies that $\Phi$ is surjective. We conclude that $\Aut(Y) \cong N(H)/H$.

  Now let $\phi \colon X \to X$ be an endomorphism. Denote by $\pi \colon Y \to X$ the factor map given by $\pi(y_0) = x_0$ and note that the existence of this map implies that $X$ is metrizable. By Lemma~\ref{l:Gdelta-orbit}, $\phi(G \cdot x_0) = G \cdot x_0$. By Lemma~\ref{l:homog-Gmap}, identifying $G\cdot x_{0}$ and $G/H$, there exists $f_\phi \in N(H)$ such that $\phi(x_0) = f_\phi^{-1} \cdot x_0$. Denote by $\theta$ the automorphism $\theta_{f_\phi} \in \Aut(Y)$ and consider the diagram
\[ \xymatrix{
Y \ar[d]_\pi \ar[r]^{\theta_{f_{\phi}}} &Y \ar[d]^\pi \\
X \ar[r]^\phi & X.
} \]
It commutes on the dense set $G \cdot y_0$ and therefore everywhere.

We proceed to show that $\phi$ is injective, that is $\phi \in \Aut(X)$. Let $\mcR_\pi$ be the closed equivalence relation on $Y$ defined by
\[
y_1 \eqrel{\mcR_\pi} y_2 \iff \pi(y_1) = \pi(y_2).
\]
As $\phi$ is an endomorphism of $X$ and the diagram commutes, we have
\begin{equation} \label{eq:theta-end}
\forall y_1, y_2 \in Y \quad y_1 \eqrel{\mcR_\pi} y_2 \implies \theta(y_1) \eqrel{\mcR_\pi} \theta(y_2).
\end{equation}
Now we show the converse. Let $y_1, y_2 \in Y$ be such that $\theta(y_1) \eqrel{\mcR_\pi} \theta(y_2)$. As the group $\Aut(Y)$ is compact, there exists a sequence of positive integers $\set{n_k}_k$ such that $\theta^{n_k} \to \id$. Applying $\theta^{n_k-1}$ to both sides of the expression $\theta(y_1) \eqrel{\mcR_\pi} \theta(y_2)$, using \eqref{eq:theta-end}, and taking limits, we obtain that $y_1 \mcR_\pi y_2$, thus showing that $\phi$ is injective.

It is now not difficult to check that the map $F \colon \Aut(X) \to \Aut(Y)$ given by $F(\phi) = \theta_{f_\phi}$ is a well-defined, injective group homomorphism. That $F$ is a topological embedding follows from the identity $\phi \circ \pi = \pi \circ F(\phi)$ and the fact that $\pi$ is surjective and uniformly continuous. This completes the proof of the theorem.
\end{proof}

\begin{cor}
  \label{c:all-coalescent}
  Let $G$ be a Polish group such that $M(G) = \widehat{G/G^*}$ for some closed, co-precompact $G^* \leq G$. Then every minimal $G$-flow is coalescent and has a compact automorphism group.
\end{cor}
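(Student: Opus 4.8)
The plan is to deduce Corollary~\ref{c:all-coalescent} directly from Theorem~\ref{th:coalescent} together with the universality of $M(G)$. Let $X$ be an arbitrary minimal $G$-flow. Since $M(G) = \widehat{G/G^*}$ and $G^*$ is extremely amenable (as $G/G^*$ is precompact and, by Theorem~\ref{t:met1}\ref{met1:i:2}, $G^*$ is extremely amenable — or simply: this is part of the standing hypothesis that $M(G)=\widehat{G/G^*}$ forces, via the discussion preceding Theorem~\ref{t:met1}), $M(G)$ is indeed the universal minimal flow, so there is a factor map $\rho \colon M(G) \to X$. Now $M(G) = \widehat{G/G^*}$ is metrizable (being the completion of a separable metric space) and has the $G_\delta$ orbit $G \cdot (1_G G^*)$, whose image $G \cdot x_0 := \rho(G \cdot (1_G G^*))$ is, by Lemma~\ref{l:Gdelta-orbit}, a $G_\delta$ orbit in $X$.

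First I would check that the stabilizer $H := G_{x_0}$ is co-precompact in $G$. The orbit map $G/G^* \to G \cdot x_0$, $gG^* \mapsto g \cdot x_0$ is a well-defined continuous surjective $G$-map (well-defined because $G^* \leq G_{x_0}$, which holds since $x_0 = \rho(1_G G^*)$ is fixed by $G^*$), and it factors through a continuous $G$-map $G/G^* \to G/H$. Since $G/G^*$ is precompact, its continuous image $G/H$ is precompact as well; equivalently, the covering condition "$\forall V \ni 1_G\ \exists$ finite $F$ with $VFG^* = G$" immediately gives $VFH = G$ because $G^* \leq H$. Hence $H$ is co-precompact.

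With these two facts in hand — $X$ has a $G_\delta$ orbit $G \cdot x_0$ whose stabilizer $H$ is co-precompact — Theorem~\ref{th:coalescent} applies verbatim to the flow $X$: it tells us that $X$ is coalescent and that $\Aut(X)$ embeds as a closed subgroup of the compact group $N(H)/H$, hence $\Aut(X)$ is itself compact. That is exactly the assertion of the corollary.

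I do not expect any serious obstacle here; the only points requiring a line of care are (i) confirming that the hypothesis $M(G) = \widehat{G/G^*}$ with $G^*$ co-precompact really does make $M(G)$ the universal minimal flow so that the factor map $\rho$ exists — this uses the extreme amenability of $G^*$, which is built into the situation via Theorem~\ref{t:met1} (or is part of what "$M(G) = \widehat{G/G^*}$" is taken to mean); and (ii) the passage from co-precompactness of $G^*$ to co-precompactness of $H \geq G^*$, which is the elementary observation recorded just above. Everything substantive has already been done in Theorem~\ref{th:coalescent}.
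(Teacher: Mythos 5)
Your proof is correct and follows essentially the same route as the paper: use universality of $M(G)$ and Lemma~\ref{l:Gdelta-orbit} to produce a $G_\delta$ orbit $G\cdot x_0$ in an arbitrary minimal flow $X$, note that $G_{x_0}\geq G^*$ is co-precompact, and invoke Theorem~\ref{th:coalescent}. (Your worry (i) is unnecessary: the hypothesis already asserts that the universal minimal flow $M(G)$ equals $\widehat{G/G^*}$, so the factor map $\rho$ exists by definition of $M(G)$ without any appeal to extreme amenability.)
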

\begin{proof}
  By the universality of $M(G)$ and Lemma~\ref{l:Gdelta-orbit}, every minimal flow $G \actson X$ has a $G_\delta$ orbit $G \cdot x_0$. Then $G_{x_0} \geq G^*$ is co-precompact in $G$ and Theorem~\ref{th:coalescent} yields the conclusion.
\end{proof}

The next lemma, which will be useful in the next section, describes completely the $H$-fixed points in minimal flows of the form $\widehat{G/H}$.
\begin{lemma} \label{l:fixed-pts}
  Let $G$ be a Polish group, $H \leq G$ a closed, co-precompact subgroup such that the $G$-flow $\widehat{G/H}$ is minimal. Denote by $x_0$ the point $H \in \widehat{G/H}$. Then
  \[
  \set{x \in \widehat{G/H} : H \cdot x = x} = N(H) \cdot x_0.
  \]
\end{lemma}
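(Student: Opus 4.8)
The inclusion $N(H) \cdot x_0 \subseteq \{x : H \cdot x = x\}$ is the easy direction: for $f \in N(H)$ and $h \in H$, we have $h f H = f (f^{-1} h f) H = f H$, so $f \cdot x_0$ is fixed by $H$ at the level of $G/H$; since $H$ acts continuously and $G/H$ is dense in $\widehat{G/H}$, the whole orbit $N(H) \cdot x_0$ consists of $H$-fixed points (one should note the action of $H$ on $N(H) \cdot x_0$ is literally trivial, being pointwise fixed on the dense piece coming from $N(H)$). The plan is therefore to concentrate on the reverse inclusion: if $x \in \widehat{G/H}$ is fixed by $H$, then $x \in N(H) \cdot x_0$.

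The key idea is to use the universal property of $\widehat{G/H}$ recorded in the introduction together with Lemma~\ref{l:homog-Gmap}. Suppose $x$ is fixed by $H$. Then, since $\widehat{G/H}$ is itself a $G$-flow and $x$ is an $H$-fixed point in it, there is a (unique) $G$-flow morphism $\pi \colon \widehat{G/H} \to \widehat{G/H}$ with $\pi(x_0) = x$. Now I want to invoke Lemma~\ref{l:homog-Gmap}: its hypothesis is a $G$-map $G/H \to G/H$, and the restriction of $\pi$ to the dense orbit $G \cdot x_0$ is exactly such a map, provided that $\pi$ maps $G \cdot x_0$ \emph{into} $G \cdot x_0 \cong G/H$ rather than merely into the completion. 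This is where the minimality hypothesis is used: since $\widehat{G/H}$ is a minimal flow with the $G_\delta$ orbit $G \cdot x_0$ (it is $G_\delta$ because $G/H$ embeds as a dense $G_\delta$ in its completion), Lemma~\ref{l:Gdelta-orbit} applies to the endomorphism $\pi$ and gives $\pi(G \cdot x_0) = G \cdot x_0$. Hence $\pi|_{G \cdot x_0}$ is a $G$-map $G/H \to G/H$, and Lemma~\ref{l:homog-Gmap} yields $f_0 \in N(H)$ with $\pi(g H) = g f_0 H$ for all $g$; in particular $x = \pi(x_0) = f_0 \cdot x_0 \in N(H) \cdot x_0$, as desired.

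The one point that needs a little care — and which I expect is the main (minor) obstacle — is matching the variance conventions: Lemma~\ref{l:homog-Gmap} is stated for the left action $g H \mapsto g f_0 H$, while the universal property produces the morphism sending the basepoint to $x$; one must check that the composition of "evaluate at $x$" with the identification $G \cdot x_0 \cong G/H$ indeed produces a genuine $G$-equivariant map of homogeneous spaces, so that Lemma~\ref{l:homog-Gmap} is applicable verbatim. This is purely formal. Note also that we only ever use that $\widehat{G/H}$ is minimal (so that Lemma~\ref{l:Gdelta-orbit} forces $\pi$ to preserve the open orbit) and that $H$ is co-precompact (so that Lemma~\ref{l:homog-Gmap} applies); no further hypotheses are required. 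Combining the two inclusions completes the proof.
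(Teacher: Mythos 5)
Your proof is correct and follows essentially the same route as the paper: use the universal property of $\widehat{G/H}$ to produce a $G$-map $\pi$ with $\pi(x_0)=x$, then apply Lemma~\ref{l:homog-Gmap} to conclude $x\in N(H)\cdot x_0$. In fact you spell out a step the paper leaves implicit, namely that Lemma~\ref{l:Gdelta-orbit} (via minimality and the $G_\delta$ orbit) is what guarantees $\pi$ maps the orbit $G\cdot x_0$ into itself so that Lemma~\ref{l:homog-Gmap} is applicable.
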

\begin{proof}
  Let $x \in \widehat{G/H}$ be an $H$-fixed point. By the universal property of $\widehat{G/H}$, there exists a $G$-map $\pi \colon \widehat{G/H} \to \widehat{G/H}$ such that $\pi(x_0) = x$. By Lemma~\ref{l:homog-Gmap}, there is $g \in N(H)$ such that $\pi(x_0) = g \cdot x_0$. The other inclusion is easy. 
\end{proof}

%%%%%%%%%%%%%%%%%%%%%%%%%%%%%%%%%%%%%%%%%%%%%%%%%%

\section{Polish groups with metrizable universal minimal flows}
\label{sec:metrizable-flows}

Now we turn to proving Theorem~\ref{t:met1}. The proof is based on two propositions, \ref{p:precpct} and \ref{p:ext-am} below. It is interesting to note that even though the arguments we present here use only tools from topological dynamics, the original proof of Proposition~\ref{p:ext-am} was combinatorial and based on Ramsey theory.

Recall that if $X$ is a uniform space, its \df{Samuel compactification} $S(X)$ is the Gelfand space of the C$^*$-algebra $\UCB(X)$ of uniformly continuous bounded functions on $X$; it can equivalently be defined by the following universal property: if $f \colon X \to K$ is a uniformly continuous map to a compact Hausdorff space $K$, then $f$ extends uniquely to a map $S(X) \to K$. In the special case where $X = G/H$ with the uniformity defined by \eqref{eq:unif}, $S(X)$ is also a $G$-flow (one easily checks that the action of $G$ on the C$^*$-algebra $\UCB(G/H)$ is continuous) and we have the following lemma.

\begin{lemma} \label{l:S-univ}
Let $G$ be a topological group and $H$ a closed subgroup. Then the Samuel compactification $S(G/H)$ of the uniform space $G/H$ has the following universal property: for every $G$-ambit $(X, x_0)$ such that $H \cdot x_0 = x_0$, there exists a unique $G$-map $\psi \colon S(G/H) \to X$ such that $\psi(gH) = g \cdot x_0$ for every $g \in G$.
\end{lemma}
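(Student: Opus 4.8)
The plan is to deduce this directly from the defining universal property of the Samuel compactification together with the observation that the orbit map $\rho \colon G \to X$, $\rho(g) = g \cdot x_0$, factors through $G/H$ and is uniformly continuous for the uniformity \eqref{eq:unif}. First I would fix a $G$-ambit $(X, x_0)$ with $H \cdot x_0 = x_0$ and note that since $x_0$ is $H$-fixed, the map $gH \mapsto g \cdot x_0$ is well-defined on $G/H$; call it $r$. The key computation is that $r$ is uniformly continuous from $(G/H, \{U_V\})$ to $X$: given an entourage $W$ of the unique compatible uniformity on the compact space $X$, by continuity of the action there is a symmetric neighborhood $V \ni 1_G$ such that $(x, v \cdot x) \in W$ for all $x \in X$, $v \in V$ (here one uses compactness of $X$ to get this uniformly in $x$); then whenever $(gH, vgH) \in U_V$ with $v \in V$, we have $r(vgH) = v \cdot (g \cdot x_0)$ and $r(gH) = g \cdot x_0$, so $(r(gH), r(vgH)) \in W$. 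Hence $r \colon G/H \to X$ is uniformly continuous, and by the universal property of $S(G/H)$ it extends uniquely to a continuous map $\psi \colon S(G/H) \to X$ with $\psi(gH) = g \cdot x_0$.

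It remains to check that $\psi$ is a $G$-map. This follows because $\psi$ and $g \mapsto g \cdot \psi(\cdot)$ (for fixed $g \in G$) — more precisely the two maps $\psi \circ \lambda_g$ and $\lambda_g^X \circ \psi$, where $\lambda_g$ denotes the $G$-action on either side — are both continuous maps $S(G/H) \to X$ agreeing on the dense subset $G/H \subseteq S(G/H)$ (this density being part of the construction of the Samuel compactification as a compactification of $G/H$), hence they agree everywhere. Uniqueness of $\psi$ is immediate since any $G$-map sending $gH$ to $g \cdot x_0$ is determined on the dense set $G/H$.

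The only slightly delicate point is the uniform (in $x$) choice of $V$ in the continuity estimate; this is where compactness of $X$ is genuinely used, and it is exactly the standard fact that a continuous action of a topological group on a compact space is bounded/uniformly equicontinuous in the appropriate sense. Everything else is a routine application of the universal property of the Samuel compactification and a density argument, so I do not expect any real obstacle — the lemma is essentially a repackaging of the fact that $S(G/H)$ is the universal $G$-ambit among those in which the distinguished point is $H$-fixed.
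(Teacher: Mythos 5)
Your proof is correct and is essentially the paper's argument: the paper's one-line proof observes that $\Psi \colon C(X) \to \UCB(G/H)$, $\Psi(f)(gH) = f(g\cdot x_0)$, is a $G$-embedding of C$^*$-algebras, which is precisely the Gelfand-dual packaging of your verification that the orbit map $gH \mapsto g\cdot x_0$ is uniformly continuous (your equicontinuity estimate on the compact space $X$ is exactly what makes $\Psi(f)$ land in $\UCB(G/H)$). Your version spells out the uniform-continuity computation and the density argument for equivariance explicitly, which the paper leaves implicit, but there is no difference in substance.
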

\begin{proof}
To prove the lemma, it suffices to observe that the map
\[
\Psi \colon C(X) \to \UCB(G/H), \quad \Psi(f)(gH) = f(g \cdot x_0)
\]
is a $G$-embedding of C$^*$-algebras.
\end{proof}

Of course, $G/G^*$ is precompact iff $S(G/G^*) = \widehat{G/G^*}$ (this is because if $G/G^*$ is precompact, then $C(S(G/G^*)) = \UCB(G/G^*) = C(\widehat{G/G^*})$).
\begin{prop} \label{p:precpct}
Let $G$ be a Polish group and $M(G)$ its universal minimal flow. Suppose that $M(G)$ is metrizable and that there is a point $x_0 \in M(G)$ such that the orbit $G \cdot x_0$ is $G_\delta$. If $G^*$ is the stabilizer of $x_0$, then $G/G^{*}$ is precompact and the $G$-spaces $\widehat{G/G^{*}}$ and $M(G)$ are isomorphic. 
\end{prop}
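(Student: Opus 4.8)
The plan is to realize $M(G)$ as the Samuel compactification of $G/G^*$ and read off both conclusions from there; a more naive attempt to compare the canonical uniformity of $G/G^*$ directly with the subspace uniformity coming from $M(G)$ founders on the fact that neither uniformity is invariant under the $G$-action, and the route via $S(G/G^*)$ avoids this. Since $M(G)$ is minimal, $G\cdot x_0$ is dense, and since it is $G_\delta$ in the compact metrizable (hence Baire) space $M(G)$ it is comeager, in particular non-meager; so by Effros' theorem the orbit map $\iota\colon G/G^*\to M(G)$, $gG^*\mapsto g\cdot x_0$, is a homeomorphism onto $G\cdot x_0$ with its subspace topology. Applying Lemma~\ref{l:S-univ} to the $G$-ambit $(M(G),x_0)$ (legitimate since $G^*=G_{x_0}$ fixes $x_0$), I get a $G$-map $\psi\colon S(G/G^*)\to M(G)$ with $\psi(gG^*)=g\cdot x_0$ for all $g\in G$. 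Write $y_0:=G^*\in S(G/G^*)$ for the canonical point, so $G\cdot y_0$ is the dense image of $G/G^*$ in $S(G/G^*)$ and $\psi(y_0)=x_0$.

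The heart of the argument — and the step where the $G_\delta$ hypothesis is really used — is the claim that $\psi^{-1}(x_0)=\{y_0\}$. To prove it, let $p\in\psi^{-1}(x_0)$; since $G\cdot y_0$ is dense in $S(G/G^*)$, choose a net $(g_\alpha)$ in $G$ with $g_\alpha\cdot y_0\to p$. By continuity of $\psi$, $g_\alpha\cdot x_0=\psi(g_\alpha\cdot y_0)\to\psi(p)=x_0$ in $M(G)$. All the terms $g_\alpha\cdot x_0$ and the limit $x_0$ lie in $G\cdot x_0$, so via the homeomorphism $\iota$ this says $g_\alpha G^*\to G^*$ in $G/G^*$; as the canonical map $G/G^*\to S(G/G^*)$ is a topological embedding, $g_\alpha\cdot y_0\to y_0$ in $S(G/G^*)$, and hence $p=y_0$ by uniqueness of limits. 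Consequently $\psi^{-1}(G\cdot x_0)=G\cdot y_0$, and therefore $S(G/G^*)$ is minimal: a nonempty closed $G$-invariant subset $K$ either meets — hence contains — the dense orbit $G\cdot y_0$ and so equals $S(G/G^*)$, or else $K\subseteq S(G/G^*)\setminus G\cdot y_0=\psi^{-1}(M(G)\setminus G\cdot x_0)$, which is impossible because $\psi(K)$ would then be a nonempty subflow of the minimal flow $M(G)$ disjoint from $G\cdot x_0$.

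It remains to conclude. Being a minimal flow, $S(G/G^*)$ receives a factor map $\sigma\colon M(G)\to S(G/G^*)$ by universality of $M(G)$; then $\psi\circ\sigma$ is an endomorphism of $M(G)$, hence an automorphism since $M(G)$ is coalescent (Ellis), so $\sigma$ is injective and thus an isomorphism, whence $\psi$ is an isomorphism $S(G/G^*)\cong M(G)$. In particular $S(G/G^*)$ is metrizable, so $\UCB(G/G^*)=C(S(G/G^*))$ is separable; since $G/G^*$ is a metrizable space this forces it to be totally bounded (an infinite uniformly separated set would yield an uncountable $1$-separated family of bounded Lipschitz functions), i.e.\ precompact. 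Finally, for a totally bounded metric space the completion coincides with the Samuel compactification, so $\widehat{G/G^*}=S(G/G^*)\cong M(G)$, and all the maps involved are $G$-equivariant, giving the isomorphism of $G$-spaces.
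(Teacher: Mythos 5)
Your proof is correct and follows essentially the same route as the paper: Effros's theorem to embed $G/G^*$ as the orbit $G\cdot x_0$, the universal property of the Samuel compactification to get $\psi\colon S(G/G^*)\to M(G)$, the same net argument transporting convergence $g_\alpha\cdot x_0\to x_0$ back to $S(G/G^*)$, and coalescence of $M(G)$ to upgrade the comparison maps to isomorphisms. The only (harmless) difference is organizational — you compute the whole fiber $\psi^{-1}(x_0)$ and deduce minimality of $S(G/G^*)$ before invoking universality, whereas the paper first takes $\phi\colon M(G)\to S(G/G^*)$ and then shows it is surjective by the same computation.
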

\begin{proof}
Let $Y$ denote the uniform space $G/G^*$ and $y_0 = G^*$. Let $S(Y)$ be the Samuel compactification of $Y$ and $i \colon Y \to S(Y)$ denote the natural embedding. Let $j \colon Y \to M(G)$ be the map given by $gG^* \mapsto g \cdot x_0$. By Effros's theorem (see \cite{Hjorth2000}*{Theorem~7.12}), $j$ is a homeomorphism onto its image. By Lemma~\ref{l:S-univ}, there exists a continuous $G$-map $\psi \colon S(Y) \to M(G)$ such that $\psi \circ i = j$. By the universal property of $M(G)$, there exists a continuous $G$-map $\phi \colon M(G) \to S(Y)$, so that we obtain the following diagram
\[ \xymatrix{
M(G) \ar@/_/[drr]_\phi \\
Y \ar[u]^j \ar[rr]_i & & S(Y) \ar@/_/[ull]_\psi.
} \]

We are going to show that $\phi$ is surjective. Let $Z = \phi(M(G))$ and $z_0 = \phi(x_0)$. Let $f = \phi \circ \psi$. As $\psi \circ \phi$ is an endomorphism of $M(G)$ and $M(G)$ is coalescent, $\phi \colon M(G) \to Z$ and hence $f|_Z \colon Z \to Z$ must be isomorphisms. Therefore by precomposing $\phi$ with an automorphism of $M(G)$, we can assume that $f|_Z = \id$. As the orbit $G \cdot i(y_0)$ is dense in $S(Y)$, there exists a net $(g_\alpha)_\alpha $ of elements of $G$ such that $g_\alpha \cdot i(y_0) \to z_0$. Applying $\psi$ to both sides, we get $g_\alpha \cdot \psi(i(y_0)) \to \psi(z_0)$. Because $\psi \circ i = j$, we have $\psi(i(y_0))=x_{0}$. Next, because $\psi\circ \phi$ is the identity on $M(G)$ and $z_{0}=\phi(x_{0})$, we get $\psi(z_{0})=x_{0}$. Thus, $g_{\alpha}\cdot x_{0}\to x_{0}$. Applying $i\circ j^{-1}$ to both sides of the previous limit, we get $g_\alpha \cdot i(y_0) \to i(y_{0})$, whence $z_0 = i(y_0)$ and $\phi$ is surjective. As we already saw that $\phi$ is an isomorphism onto its image, $M(G)$ is isomorphic to $S(Y)$. In particular, $S(Y)$ is metrizable, showing that $G/G^*$ is precompact and $M(G) \cong S(G/G^*) = \widehat{G/G^*}$.
\end{proof}

\begin{prop} \label{p:ext-am}
Let $G^* \le G$ be Polish groups such that $G/G^*$ is precompact and $\widehat{G/G^*}$ is the universal minimal flow of $G$. Then $G^*$ is extremely amenable. 
\end{prop}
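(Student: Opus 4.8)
The plan is to prove that $G^*$ is extremely amenable in the equivalent form: the greatest ambit of $G^*$ has a $G^*$-fixed point. Write $x_0 = G^* \in \widehat{G/G^*} = M(G)$, which is fixed by $G^*$. Since $G/G^*$ is precompact, $\widehat{G/G^*} = S(G/G^*)$ (the Samuel compactification of the uniform space $G/G^*$), so by Lemma~\ref{l:S-univ} $M(G)$ is the universal $G$-ambit carrying a $G^*$-fixed point. Let $S(G)$ be the greatest $G$-ambit, i.e., the Samuel compactification of $G$ with its right uniformity, with distinguished point $1_G$, and let $\rho\colon S(G)\to M(G)$ be the canonical $G$-map dual to the equivariant inclusion $\UCB(G/G^*)\hookrightarrow \UCB(G)$; thus $\rho(1_G) = x_0$. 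I would then combine two facts: (a) the fibre $\rho^{-1}(x_0)$, equipped with the $G^*$-action, is the greatest $G^*$-ambit $S(G^*)$; and (b) $\rho$ admits a continuous $G$-equivariant section.

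For (a): the inclusion $\overline{G^*\cdot 1_G}\subseteq \rho^{-1}(x_0)$ is immediate because $\rho(h\cdot 1_G)=h\cdot x_0 = x_0$ for $h\in G^*$, so the point is the reverse inclusion. Fix a right-invariant compatible metric $d_R$ on $G$ and note that $g\mapsto d_R(g,G^*)$ is right-$G^*$-invariant (since $d_R$ is right-invariant and $G^*$ a subgroup), hence descends to the $1$-Lipschitz function $gG^*\mapsto d(gG^*,x_0)$ on $(G/G^*,d)$. For $\delta>0$ put $\chi_\delta(g)=\max(0,\,1-d_R(g,G^*)/\delta)$; being right-$G^*$-invariant, $\chi_\delta$ lies in $\UCB(G/G^*)$, and $\chi_\delta(x_0)=1$. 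If $\eta\in\rho^{-1}(x_0)$ then $\eta(\chi_\delta)=1$, and since $\eta$ is a positive functional, $\eta(F)=\eta(F\chi_\delta)$ for every $F\in\UCB(G)$. If $F$ also vanishes on $G^*$, then on $\{d_R(\cdot,G^*)<\delta\}$ — every point of which is within $d_R$-distance $\delta$ of an element of $G^*$ — we have $|F|\leq\omega_F(\delta)$, where $\omega_F$ is the modulus of continuity of $F$; hence $|\eta(F)|=|\eta(F\chi_\delta)|\leq\omega_F(\delta)\to 0$, so $\eta(F)=0$. Since $\overline{G^*\cdot 1_G}$ is precisely the common zero set, in $S(G)$, of the functions of $\UCB(G)$ vanishing on $G^*$, this gives $\eta\in\overline{G^*\cdot 1_G}$. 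Finally, as $G^*$ is closed in the metrizable-uniform group $G$, bounded uniformly continuous functions on $G^*$ extend to $G$, so $\UCB(G)$ restricts onto $\UCB(G^*)$; this identifies the $G^*$-flow $\overline{G^*\cdot 1_G}$ with $S(G^*)$. I expect this step to be the main obstacle.

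For (b): take a minimal subflow $Z\subseteq S(G)$. Then $\rho|_Z\colon Z\to M(G)$ is onto since $M(G)$ is minimal, and, $M(G)$ being the universal minimal flow, there is a $G$-factor map $\pi\colon M(G)\to Z$. The composition $\rho|_Z\circ\pi$ is an endomorphism of $M(G)$, hence an automorphism because the universal minimal flow is coalescent; therefore $\pi$ and $\rho|_Z$ are isomorphisms of flows, and $s:=(\rho|_Z)^{-1}\colon M(G)\to Z\subseteq S(G)$ is a continuous $G$-equivariant section of $\rho$.

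Combining (a) and (b): $s(x_0)\in\rho^{-1}(x_0)=S(G^*)$, and $h\cdot s(x_0)=s(h\cdot x_0)=s(x_0)$ for all $h\in G^*$, so $s(x_0)$ is a $G^*$-fixed point of the greatest $G^*$-ambit. As every minimal $G^*$-flow is a factor of $S(G^*)$, it follows that every minimal $G^*$-flow has a $G^*$-fixed point, i.e., $G^*$ is extremely amenable. Beyond step (a), the argument uses only the coalescence of the universal minimal flow — already available — together with formal manipulations.
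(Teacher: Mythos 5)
Your proof is correct, but it takes a genuinely different route from the one in the paper. The paper verifies extreme amenability of $G^*$ via finite oscillation stability of the right translation action $G^* \actson (G^*, d_R)$: it extends a $1$-Lipschitz function on $G^*$ to $G$, runs the diagonal flow on $\mathcal F \times \widehat{G/G^*}$, invokes Lemma~\ref{l:fixed-pts} to translate a $G^*$-fixed point back over the basepoint by an element of $N(G^*)$, and concludes with an explicit approximation $h_i g_i^{-1} \to 1_G$. You instead argue entirely at the level of the greatest ambit: you identify the fibre of $\rho \colon S(G) \to S(G/G^*) = M(G)$ over $x_0$ with the greatest $G^*$-ambit $S(G^*)$, and you land a $G^*$-fixed point in that fibre by means of a $G$-equivariant section of $\rho$, obtained from universality together with coalescence of $M(G)$ (the section argument thus replaces Lemma~\ref{l:fixed-pts}). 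Both halves check out: the cutoff functions $\chi_\delta$ do lie in $\UCB(G/G^*)$ because $g \mapsto d_R(g, G^*)$ is $1$-Lipschitz and right-$G^*$-invariant; the equality $\eta(F)=\eta(F\chi_\delta)$ is cleanest via multiplicativity of the character $\eta$ (your appeal to positivity also works, via $(1-\chi_\delta)^2 \le 1-\chi_\delta$ and Cauchy--Schwarz); and the identification of $\overline{G^*\cdot 1_G}$ with $S(G^*)$ rests on the standard extension theorem for bounded uniformly continuous functions on the closed subset $G^*$ of the metric space $(G,d_R)$, whose right uniformity restricts to that of $G^*$. What your approach buys is a reusable structural fact of independent interest --- that the basepoint fibre of $S(G)\to S(G/H)$ is the greatest $H$-ambit --- which is exactly the mechanism underlying the later ambit-theoretic treatments of this circle of results (e.g.\ Zucker's); what the paper's proof buys is a more elementary and concrete argument that avoids Gelfand duality and stays within the Lipschitz-function formalism already set up for the rest of the section.
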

\begin{proof}
  Fix a right-invariant metric $d_R$ on $G$. First note that the space $\mcF$ of $1$-Lipschitz functions $(G, d_R) \to [0, 1]$ endowed with the pointwise convergence topology is compact metrizable and equipped with the $G$-action
\[
(g \cdot \gamma)(x) = \gamma(xg),
\]
it becomes a $G$-flow.
  
Our goal is to show that the right translation action $G^* \actson (G^*,d_R)$ is \emph{finitely oscillation stable}, which is equivalent to saying that, for any $1$-Lipschitz $\gamma \colon (G^*,d_R) \to [0,1]$, there exists a $G^*$-fixed point in $\cl{G^* \cdot \gamma}$. By \cite{Pestov2006}*{Theorem 2.1.11}, this will imply that $G^*$ is extremely amenable.
We begin with a $1$-Lipschitz $\gamma \colon (G^*,d_R) \to [0,1]$, which we extend to a $1$-Lipschitz map from $(G,d_R)$ to $[0,1]$, still denoted by $\gamma$; for instance, one can achieve this by setting $\gamma(g)= \min(1,\inf \{\gamma(g^*)+d_R(g,g^*) \colon g^* \in G^*\})$.
Consider the diagonal action of $G$ on $\mathcal F \times \widehat{G/G^*}$. To avoid confusion, let $x_0$ denote the point $G^*$ in $\widehat{G/G^*}$. 

Since $M(G)$ has a $G^*$-fixed point, this is true for every $G$-flow; in particular, there is an $G^*$-fixed point $(\gamma_0,y)$ in $\overline{G \cdot (\gamma,x_0)}$. Then $y$ is an $G^*$-fixed point in $\widehat{G/G^*}$, so by Lemma~\ref{l:fixed-pts}, we know that $y=aG^*$ for some $a$ in the normalizer of $G^*$. Then the point $a^{-1} \cdot (\gamma_0,y) = (\gamma_1,x_0)$ is also fixed by $G^*$ and belongs to $\overline {G \cdot (\gamma, x_0)}$. Thus there exists a sequence $(g_i)_{i}$ of elements of $G$ such that $g_i \cdot \gamma \to \gamma_1$ and $g_i G^* \to G^*$. 

We can find a sequence $(h_i)_{i}$ of elements of $G^*$ such that $h_i g_i^{-1} \to 1_G$.
For any fixed $f \in G$, we have that
\[
|\gamma(f h_i)- \gamma(f g_i)| \leq d_R(f h_i, f g_i) = d_R(f h_i g_i^{-1} f^{-1}, 1_G) \to 0,
\]
so that $\gamma(f h_i) \to \gamma_1(f)$. Thus, the $G^*$-fixed point $\gamma_1$ belongs to $\overline{G^* \cdot \gamma}$, concluding the proof.
\end{proof}

We are now ready to prove Theorem~\ref{t:met1} from the introduction.
\begin{proof}[Proof of Theorem~\ref{t:met1}]
\ref{met1:i:1}$\Rightarrow$\ref{met1:i:2}. Let $x_{0}\in M(G)$ be a point with a generic orbit and let $G^{*}$ denote the stabilizer of $x_{0}$. By Proposition \ref{p:precpct}, $M(G) = \widehat{G/G^{*}}$. By Proposition \ref{p:ext-am}, $G^{*}$ is extremely amenable. 

\ref{met1:i:2}$\Rightarrow$\ref{met1:i:1}. Immediate after observing that $G/G^{*}$ is a generic orbit in $\widehat{G/G^{*}}$.
\end{proof}

\subsection{Smoothness of isomorphism}
\label{sec:smooth-isom}

An important direction in modern descriptive set theory is classifying definable equivalence relations according to their complexity. This is done mostly via the notion of Borel reducibility: an equivalence relation $E$ on a Polish space $X$ is \df{Borel reducible} to an equivalence relation $F$ on a Polish space $Y$ if there exists a Borel map $f \colon X \to Y$ such that for all $x_1, x_2 \in X$, $x_1 \eqrel{E} x_2 \iff f(x_1) \eqrel{F} f(x_2)$. Many natural examples arise as equivalence relations of isomorphism of various mathematical objects. One parametrizes the objects of interest by the elements of some Polish space and then tries to understand how complex the equivalence relation of isomorphism is. An equivalence relation is called \df{smooth} if it is Borel reducible to equality on some Polish space. Smooth equivalence relations are the simplest ones and they are at the bottom of the Borel reducibility hierarchy. For more on the theory of Borel reducibility, see, for example, \cite{Gao2009a}.

If the universal minimal flow of a Polish group $G$ is metrizable and has a $G_\delta$-orbit, this has quite strong implications about \emph{all} minimal flows of $G$. In this subsection, we show that isomorphism of minimal flows of such a group is smooth. This should be contrasted with a recent non-classification result, due to Gao, Jackson and Seward~\cite{Gao2012p}, stating that isomorphism of minimal subshifts of an infinite, countable group is \emph{not} smooth.

Let $G$ be a Polish group such that $M(G)$ is metrizable. If $X$ is any minimal flow of $G$, then there is a $G$-map $\pi \colon M(G) \to X$ that gives rise to an equivalence relation $\mcR_\pi$ on $M(G)$ defined by
\[
z_1 \eqrel{\mcR_\pi} z_2 \iff \pi(z_1) = \pi(z_2).
\]
$\mcR_\pi$ is a $G$-invariant, closed, equivalence relation, \df{icer} for short. Conversely, every icer $\mcR$ defines a minimal flow $M(G)/\mcR$. We have the following general fact.
\begin{prop}
  \label{p:equiv-Polish}
  Let $Y$ be a compact Polish space and let
  \[
  \mcE = \set{ \mcR \sub Y^2 : \mcR \text{ is a closed equivalence relation}}.
  \]
  Then $\mcE$ is $G_\delta$ in $K(Y^2)$, the space of compact subsets of $Y^2$ equipped with the Vietoris topology, and therefore a Polish space.
\end{prop}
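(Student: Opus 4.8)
The plan is as follows. Since $Y^{2}$ is compact metrizable, the hyperspace $K(Y^{2})$ with the Vietoris topology is itself compact metrizable, and on it the Vietoris topology agrees with the topology induced by the Hausdorff metric $d_{H}$ associated to a fixed compatible metric $d$ on $Y^{2}$; moreover $(p,K)\mapsto d(p,K)$ is continuous on $Y^{2}\times K(Y^{2})$. As a $G_{\delta}$ subset of a Polish space is Polish, it suffices to show that $\mcE$ is $G_{\delta}$. I would write $\mcE$ as the intersection of the sets of reflexive, symmetric, and transitive members of $K(Y^{2})$ and treat the three conditions separately.

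Reflexivity and symmetry are the easy (in fact, closed) conditions. For reflexivity: if $\mcR_{k}\to\mcR$ with $\Delta_{Y}\subseteq\mcR_{k}$ for all $k$, then for each $y\in Y$ we have $d((y,y),\mcR)\le d_{H}(\mcR_{k},\mcR)\to 0$, so $(y,y)\in\mcR$; hence $\{\mcR:\Delta_{Y}\subseteq\mcR\}$ is closed. For symmetry: the flip $\sigma\colon Y^{2}\to Y^{2}$, $(x,y)\mapsto(y,x)$, induces a homeomorphism $\mcR\mapsto\sigma(\mcR)$ of $K(Y^{2})$, and the symmetric relations form the equalizer of this map with the identity, which is closed since $K(Y^{2})$ is Hausdorff.

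The real content is transitivity, where the obstacle is the existential quantifier over the ``middle'' point. I would show that the set of non-transitive members of $K(Y^{2})$ is $F_{\sigma}$. For a positive rational $r$, let $A_{r}$ be the set of those $\mcR\in K(Y^{2})$ for which there exist $x,y,z\in Y$ with $(x,y)\in\mcR$, $(y,z)\in\mcR$, and $d((x,z),\mcR)\ge r$. Because each $\mcR$ is closed, $(x,z)\notin\mcR$ is equivalent to $d((x,z),\mcR)\ge r$ for some positive rational $r$, so $\mcR$ fails transitivity precisely when $\mcR\in\bigcup_{0<r\in\Q}A_{r}$. Each $A_{r}$ is closed: given $\mcR_{k}\to\mcR$ with witnesses $x_{k},y_{k},z_{k}$, compactness of $Y$ lets us pass to a subsequence along which $x_{k}\to x$, $y_{k}\to y$, $z_{k}\to z$; then $(x,y),(y,z)\in\mcR$ since these are limits of points of $\mcR_{k}$ and $\mcR_{k}\to\mcR$, while continuity of $(p,K)\mapsto d(p,K)$ gives $d((x,z),\mcR)=\lim_{k}d((x_{k},z_{k}),\mcR_{k})\ge r$, so $\mcR\in A_{r}$. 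Hence the transitive members of $K(Y^{2})$ form a $G_{\delta}$ set, and $\mcE$, being the intersection of two closed sets and one $G_{\delta}$ set, is $G_{\delta}$, hence Polish.

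I expect the step needing the most care to be the proof that $A_{r}$ is closed: this is where one exploits compactness of $Y$ to keep the middle point $y$ alive in the limit, and it is essential to have phrased the defining condition quantitatively, with $d((x,z),\mcR)\ge r$ rather than merely $(x,z)\notin\mcR$, so that it passes to Hausdorff-metric limits. Everything else is routine hyperspace bookkeeping.
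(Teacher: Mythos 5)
Your proof is correct and follows essentially the same route as the paper: reflexivity and symmetry are closed conditions, and non-transitivity is exhibited as an $F_\sigma$ set by handling the existential quantifier over $(x,y,z)$ via compactness of $Y$. Your sets $A_r$ are exactly the projections of the compact pieces in the paper's decomposition of $\{(x,y,z,\mcR) : (x,y),(y,z)\in\mcR,\ (x,z)\notin\mcR\}$ as a $K_\sigma$ set, just written out with the Hausdorff metric made explicit.
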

\begin{proof}
  The conditions that $\mcR$ is reflexive and symmetric are closed. We check that transitivity is $G_\delta$. We have that
  \[
  \mcR \text{ is not transitive} \iff \exists x, y, z \in Y \ (x, y) \in \mcR \And (y, z) \in \mcR \And (x, z) \notin \mcR.
  \]
  The set of $(x, y, z, \mcR) \in Y^3 \times K(Y^2)$ that satisfy the condition after the quantifier is an intersection of an open and a closed set in a compact space, so it is $K_\sigma$ (a countable union of compact sets). Therefore its projection on $K(Y^2)$ is also $K_\sigma$, hence it is $F_\sigma$, and transitivity is $G_\delta$.
\end{proof}

Let
\[
\mcE = \set{\mcR \sub M(G)^2 : \mcR \text{ is an icer}}.
\]
In view of the preceding discussion and Proposition~\ref{p:equiv-Polish}, it is natural to parametrize minimal flows of $G$ by elements of the Polish space $\mcE$ (being $G$-invariant is obviously a closed condition).
\begin{theorem}
  \label{th:smooth-isom}
  Let $G$ be a Polish group and suppose that $M(G)$ is metrizable and has a $G_\delta$ orbit. Let $\mcR_1$ and $\mcR_2$ be icers on $M(G)$. Then $M(G)/\mcR_1$ is isomorphic to $M(G)/\mcR_2$ iff there is $\sigma \in \Aut(M(G))$ such that $\sigma \cdot \mcR_1 = \mcR_2$. In particular, isomorphism of minimal flows of $G$ is smooth.
\end{theorem}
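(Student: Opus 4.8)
The plan is to show that the map $\mcR \mapsto M(G)/\mcR$ descends to a bijection between $\Aut(M(G))$-orbits on $\mcE$ and isomorphism classes of minimal $G$-flows, and then that the $\Aut(M(G))$-action on $\mcE$ has smooth orbit equivalence relation because $\Aut(M(G))$ is compact (Corollary~\ref{c:all-coalescent} via Theorem~\ref{th:coalescent} — the hypothesis gives a $G_\delta$ orbit with co-precompact stabilizer in $M(G)$, so $\Aut(M(G))$ embeds in the compact group $N(H)/H$; in fact since $M(G) = \widehat{G/H}$ by Proposition~\ref{p:precpct}, it equals $N(H)/H$). The point is that a continuous action of a \emph{compact} Polish group on a Polish space always has a smooth orbit equivalence relation — orbits are compact, hence the quotient is a standard Borel space, or one can directly average a metric over the compact group to get a selector.

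The first and main step is the ``iff'': if $\theta \colon M(G)/\mcR_1 \to M(G)/\mcR_2$ is an isomorphism of $G$-flows, I must produce $\sigma \in \Aut(M(G))$ with $\sigma \cdot \mcR_1 = \mcR_2$. Let $\pi_i \colon M(G) \to M(G)/\mcR_i$ be the quotient maps. Then $\theta \circ \pi_1 \colon M(G) \to M(G)/\mcR_2$ is a $G$-factor map, and by the universal property of $M(G)$ together with coalescence, I can lift it: there is $\sigma \in \Aut(M(G))$ such that $\pi_2 \circ \sigma = \theta \circ \pi_1$. Indeed, pick any point and use the universal property of $M(G)$ to get a $G$-map $\sigma \colon M(G) \to M(G)$ making the square commute; since $M(G)$ is coalescent (Ellis) $\sigma$ is an automorphism. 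Now $z_1 \mcR_1 z_2 \iff \pi_1(z_1) = \pi_1(z_2) \iff \theta\pi_1(z_1) = \theta\pi_1(z_2) \iff \pi_2\sigma(z_1) = \pi_2\sigma(z_2) \iff \sigma(z_1)\,\mcR_2\,\sigma(z_2)$, i.e. $\sigma \cdot \mcR_1 = \mcR_2$, where $\sigma \cdot \mcR$ denotes $(\sigma \times \sigma)(\mcR)$. The converse direction is routine: if $\sigma \cdot \mcR_1 = \mcR_2$ then $\sigma$ induces a well-defined $G$-equivariant homeomorphism $M(G)/\mcR_1 \to M(G)/\mcR_2$.

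The one subtlety to check carefully is that the $\Aut(M(G))$-action on $\mcE \subseteq K(M(G)^2)$ is genuinely continuous (so that ``smooth'' is meaningful): this follows because $\Aut(M(G)) \actson M(G)$ is continuous and jointly continuous in fact (it is a compact group acting by homeomorphisms), hence $\sigma \mapsto \sigma \times \sigma$ is continuous into $\Homeo(M(G)^2)$, and the induced action on the hyperspace $K(M(G)^2)$ with the Vietoris topology is continuous. Then the smoothness statement is an instance of the general principle that orbit equivalence relations of continuous actions of compact (Polish) groups on Polish spaces are smooth — one way: the quotient $\mcE/\Aut(M(G))$ is a standard Borel space because the action has closed orbits and one can average a compatible metric over Haar measure to obtain an $\Aut(M(G))$-invariant pseudometric whose induced metric on the quotient is a Borel reduction. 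Combining this with the bijection from the first step, we get that isomorphism of minimal flows of $G$, as parametrized by $\mcE$, is Borel reducible to equality on $\mcE/\Aut(M(G))$, hence smooth.

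I expect the main obstacle to be purely expository rather than mathematical: making precise the parametrization of minimal flows by $\mcE$ and checking that the isomorphism relation is itself Borel on $\mcE$ (so that reducing it makes sense), together with the routine but slightly fiddly verification that the lift $\sigma$ exists and that all the maps in sight — the quotient maps, the action on the hyperspace — have the continuity/Borelness needed. The genuinely new input over what has already been proved in this section is only the compactness of $\Aut(M(G))$ from Theorem~\ref{th:coalescent}, which does all the heavy lifting.
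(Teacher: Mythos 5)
Your overall architecture (reduce to the $\Aut(M(G))$-action on $\mcE$, then use compactness of $\Aut(M(G))$ for smoothness) matches the paper, and the converse direction and the smoothness argument are fine. The problem is the central step: the existence of $\sigma \in \Aut(M(G))$ with $\pi_2 \circ \sigma = \theta \circ \pi_1$. You justify this by ``pick any point and use the universal property of $M(G)$ to get a $G$-map $\sigma$ making the square commute,'' but the universal property only asserts the existence of \emph{some} $G$-map from $M(G)$ onto a given minimal flow; it does not let you prescribe where a chosen point goes, and the naive attempt to define $\sigma$ on an orbit by $g\cdot z \mapsto g \cdot w$ for a point $w$ in the correct fiber is in general neither well-defined nor continuously extendable. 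This is exactly the point where the paper has to work: it uses $M(G) = \widehat{G/G^*}$ with $G^*$ extremely amenable to produce a $G^*$-fixed point $z_1$ in the fiber $\pi_2^{-1}(f(\pi_1(z_0)))$, identifies $z_1 = \sigma(z_0)$ via Lemma~\ref{l:fixed-pts} and Theorem~\ref{th:coalescent}, and then gets $f \circ \pi_1 = \pi_2 \circ \sigma$ by density. As written, your proposal asserts the crux rather than proving it.

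That said, the lifting statement you want is true, and it can be obtained from universality and coalescence alone by the fiber-product (``regularity'') argument that the paper itself uses in the proof of Theorem~\ref{th:univ-prox-flow}: let $Z = \set{(p,q) \in M(G)^2 : \theta(\pi_1(p)) = \pi_2(q)}$, a nonempty closed $G$-invariant set, take a minimal subflow $W \sub Z$, and observe that both projections $W \to M(G)$ are isomorphisms (each is surjective by minimality, and precomposing with a $G$-map $M(G) \to W$ given by universality yields an endomorphism of $M(G)$, which is an automorphism by coalescence, forcing injectivity); then $\sigma = \mathrm{proj}_2 \circ \mathrm{proj}_1^{-1}$ does the job. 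If you add this, your argument becomes correct and is genuinely different from the paper's: it proves the ``iff'' for \emph{any} topological group with a coalescent universal minimal flow, without invoking Theorem~\ref{t:met1} or extreme amenability of a stabilizer, and the metrizability/$G_\delta$ hypothesis is then only needed for the ``in particular'' clause (Polishness of $\mcE$ and compactness of $\Aut(M(G))$). The paper's route, by contrast, leans on the concrete description $M(G) = \widehat{G/G^*}$ and Lemma~\ref{l:fixed-pts}, which keeps everything inside the machinery already developed in Sections 2--3.
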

\begin{proof}
  The $(\Leftarrow)$ direction being obvious, we set to prove the converse. By Theorem~\ref{t:met1}, $M(G) = \widehat{G/G^*}$ for some co-precompact, extremely amenable $G^* \leq G$. Let $z_0$ be the distinguished $G^*$-fixed point of $\widehat{G/G^*}$. Let $\pi_i \colon M(G) \to M(G)/\mcR_i$ denote the natural factor maps and finally, let $f \colon M(G)/\mcR_1 \to M(G)/\mcR_2$ be an isomorphism. The point $x_2 = f(\pi_1(z_0)) \in M(G)/\mcR_2$ is a fixed point for $G^*$. Let $K = \pi_2^{-1}(\set{x_2})$. Then $K$ is a $G^*$-invariant closed subset of $M(G)$ and by the extreme amenability of $G^*$, there is a $G^*$-fixed point $z_1 \in K$. By Lemma~\ref{l:fixed-pts} and Theorem~\ref{th:coalescent}, there exists $\sigma \in \Aut(M(G)) = N(G^*)/G^*$ such that $\sigma(z_0) = z_1$. Then $f(\pi_1(z_0)) = \pi_2(\sigma(z_0))$, so $f \circ \pi_1 = \pi_2 \circ \sigma$ everywhere. This shows that $\sigma \cdot \mcR_1 = \mcR_2$.

  For the second conclusion, recall that $\Aut(M(G))$ is a compact group acting continuously on the Polish space $\mcE$, so $\mcE/\Aut(M(G))$ is a Polish space.
\end{proof}

\section{Universal minimal proximal flows and strong amenability}
\label{sec:proximal-flows}

\subsection{Universal minimal proximal flows}
Two points $x, y$ in a $G$-flow $X$ are called \df{proximal} if there exists a net $(g_{\alpha})_\alpha$ of elements of $G$ such that $\lim_\alpha g_\alpha \cdot x = \lim_\alpha g_\alpha \cdot y$. The points $x$ and $y$ are \df{distal} if $x = y$ or $x$ and $y$ are not proximal. The flow $X$ is called \df{proximal} if every pair of points is proximal and \df{distal} if every pair of points is distal. Every topological group $G$ admits a \df{universal minimal proximal flow} $\Pi(G)$ (i.e., one that maps onto every other minimal proximal flow) that is unique up to isomorphism. For a proof of this fact and more background on distal and proximal flows, see \cite{Glasner1976}. 

If $\gamma \in \Aut(X)$, then the points $x$ and $\gamma(x)$ are distal for every $x \in X$ (see \cite{Glasner1976}*{II, Lemma~3.3}). As a consequence, in order to construct a proximal factor of $M(G)$, it is necessary to divide by the action of $\Gamma = \Aut(M(G))$. It turns out that this is also sufficient to yield the universal minimal proximal flow.

When $X$ is a $G$-flow and $\Gamma \leq \Aut(X)$, we define its \df{maximal $\Gamma$-invariant factor $Z$} as follows: let $R$ denote the smallest closed equivalence relation containing all $\{(x,\gamma(x)) \colon \gamma \in \Gamma \}$. Then $Z$ is the quotient of $X$ by $R$, endowed with the quotient topology, which is compact Hausdorff because $R$ is closed. Observe that $G$ naturally acts on $Z$, and the factor map is $G$-equivariant.

In the special case where $\Gamma=\Aut(M(G))$ is compact, $\set{(x, \gamma(x)) : x \in M(G), \gamma \in \Gamma}$ is already closed and the quotient by it, denoted by $M(G)/\Gamma$, is the maximal $\Gamma$-invariant factor of $M(G)$. By Theorem~\ref{th:coalescent}, this always happens when $M(G)$ is of the form $\widehat{G/G^*}$. The following theorem implies in particular that, in that case, the universal proximal flow of $G$ is $M(G)/\Gamma$.

\begin{theorem} \label{th:univ-prox-flow}
Let $G$ be a topological group, $M(G)$ be its universal minimal flow, and $\Gamma = \Aut(M(G))$. Then the universal proximal flow of $G$ is the maximal $\Gamma$-invariant factor of $M(G)$.
\end{theorem}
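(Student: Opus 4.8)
Write $M = M(G)$ and let $P$ be the maximal $\Gamma$-invariant factor of $M$, with factor map $q \colon M \to P$. The plan is to show that $P$ satisfies the two defining properties of the universal minimal proximal flow: that $P$ is a minimal proximal $G$-flow, and that every minimal proximal $G$-flow is a factor of $P$. The soft preliminaries come first. Since automorphisms of $M$ commute with the $G$-action, the algebra $C(M)^\Gamma$ of $\Gamma$-invariant functions is $G$-invariant, so $P$ really is a $G$-flow and $q$ a $G$-map; being a factor of the minimal flow $M$, it is minimal; and $q \circ \gamma = q$ for every $\gamma \in \Gamma$, because $q^{*}C(P) \subseteq C(M)^\Gamma$ by the definition of $P$ and $C(P)$ separates points. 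I would also record the elementary fact used in both directions: a minimal $G$-flow $X$ is proximal if and only if the diagonal $\Delta_X$ is the unique minimal subflow of $X \times X$. Indeed, if $X$ is proximal and $N \subseteq X \times X$ is a minimal subflow, then for any $(x, y) \in N$ one has $N = \overline{G \cdot (x, y)}$, which meets $\Delta_X$ by proximality; minimality of $N$ then forces $N \subseteq \Delta_X \cong X$, hence $N = \Delta_X$. Conversely, a minimal subflow contained in $\overline{G \cdot (x, y)}$ must be $\Delta_X$, and this witnesses that $x$ and $y$ are proximal.

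The technical core is a description of the minimal subflows of $M \times M$: each such $N'$ is the graph $\set{(z, \sigma z) : z \in M}$ of a unique automorphism $\sigma \in \Gamma$. To prove it, I would combine the universality of $M$ with the coalescence of $M(G)$ (due to Ellis). The first projection $\rho_1 \colon N' \to M$ is a factor map onto $M$; by universality there is a $G$-map $\psi \colon M \to N'$, which is surjective since $N'$ is minimal, and $\rho_1 \circ \psi$ is then an endomorphism of $M$, hence an automorphism by coalescence. This forces $\psi$ to be injective, so $\psi$ is an isomorphism and therefore so is $\rho_1$. Then $\sigma := \rho_2 \circ \rho_1^{-1}$ is an endomorphism of $M$, hence an automorphism, and $N'$ is precisely its graph.

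Granting this, both halves of the theorem are short. For the proximality of $P$: given a minimal subflow $N$ of $P \times P$, pick a minimal subflow $N'$ of $(q \times q)^{-1}(N)$; then $(q \times q)(N') = N$ by minimality of $N$, while $(q \times q)(N') = \set{(q z, q \sigma z) : z \in M} = \Delta_P$ since $q \circ \sigma = q$. Hence $\Delta_P$ is the unique minimal subflow of $P \times P$, and $P$ is proximal by the fact recorded above. For the universality: let $X$ be a minimal proximal $G$-flow and fix a factor map $\pi \colon M \to X$ using the universality of $M$; for each $\gamma \in \Gamma$, the image of $(\pi, \pi \circ \gamma) \colon M \to X \times X$ is a minimal subflow of $X \times X$, hence equals $\Delta_X$, so $\pi \circ \gamma = \pi$. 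As this holds for every $\gamma \in \Gamma$, each function $f \circ \pi$ with $f \in C(X)$ is $\Gamma$-invariant, so $\pi$ factors through $q$, exhibiting $X$ as a factor of $P$.

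The only step I expect to require real work is the identification of the minimal subflows of $M \times M$ with graphs of automorphisms; this is where the special structure of the universal minimal flow — its universality and its coalescence — genuinely enters, and the rest of the argument is purely formal. In particular, the proof makes no use of any topology on $\Gamma$ and applies verbatim to an arbitrary topological group $G$.
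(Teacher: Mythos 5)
Your proof is correct and its technical core is exactly the paper's: universality plus Ellis coalescence of $M(G)$ show that every minimal subflow of $M(G)\times M(G)$ is the graph of an automorphism, which is precisely the step the paper carries out via the projections $\pi_1,\pi_2$ of $\overline{G\cdot(x',y')}$. The only difference is packaging — you route both the proximality of the factor and its universality through the characterization ``$X$ is proximal iff $\Delta_X$ is the unique minimal subflow of $X\times X$,'' whereas the paper argues directly with nets and almost periodic points — but this is a reformulation, not a different proof.
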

\begin{proof}
  Let $Z$ be the maximal $\Gamma$-invariant factor of $M(G)$ and let $\sigma \colon M(G) \to Z$ be the factor map. By definition, $\sigma(\gamma (x)) = \sigma(x)$ for all $\gamma \in \Gamma, x \in M(G)$. To verify that $Z$ is proximal, it suffices to see that for every point $(x, y) \in M(G)^2$, there is $\gamma \in \Gamma$ such that $x$ and $\gamma(y)$ are proximal. To that end, consider an \df{almost periodic} point (i.e., one with a minimal orbit closure) $(x', y') \in \overline{G \cdot (x, y)}$ and a net $(g_\alpha)_{\alpha} \sub G$ such that $g_\alpha \cdot (x, y) \to (x', y')$. Then, we claim that there exists $\gamma \in \Gamma$ such that $y' = \gamma (x')$. Indeed, let $X = \overline{G \cdot (x', y')}$ and $\pi_1, \pi_2 \colon X \to M(G)$ be the two canonical projections. As $M(G)$ is universal and coalescent, $\pi_1$ is an isomorphism. It follows that $\pi_{1} ^{-1}(x') = (x',y')$. Similarly, $\pi_{2}$ is an isomorphism, and so $\gamma = \pi_2\pi_1^{-1}$ is an automorphism of $M(G)$ which satisfies $\gamma (x') = y'$. Now,
\[
g_\alpha \cdot (x, \gamma^{-1} (y)) = (g_\alpha \cdot x, \gamma^{-1}(g_\alpha \cdot y)) \to (x', \gamma^{-1}(y')) = (x', x'),
\]
whence $x$ and $\gamma^{-1}(y)$ are proximal.

We next check that $Z$ is universal for proximal flows. Let $X$ be a proximal flow. As $M(G)$ is universal, there is a map $\pi \colon M(G) \to X$. We need to check that $\pi$ factors through $Z$, i.e., $\pi(y) = \pi(\gamma(y))$ for every $y \in M(G)$, $\gamma \in \Gamma$. The points $\pi(y)$ and $\pi(\gamma(y))$ are proximal in $X$, so there is $x \in X$ and a net $(g_\alpha)_{\alpha} \sub G$ such that $g_\alpha \cdot (\pi(y), \pi(\gamma(y))) \to (x, x)$. By passing to a subnet, we can assume that $g_\alpha \cdot y$ and $g_\alpha \cdot \gamma(y) = \gamma(g_{\alpha} \cdot y)$ converge to $y'$ and $\gamma(y')$, respectively. By applying $\pi$ to both sides, we see that $\pi(y') = x = \pi(\gamma (y'))$. As $M(G)$ is minimal, there is a net $(h_\alpha)_{\alpha}$ such that $h_\alpha \cdot y' \to y$. Then $h_\alpha \cdot \pi(y') \to \pi(y)$ and $h_{\alpha} \cdot \pi(\gamma (y')) \to \pi(\gamma(y))$. So $\pi(y) = \pi(\gamma (y))$ as required. 
\end{proof}

Recall that a topological group $G$ is called \df{strongly amenable} if it admits no non-trivial minimal proximal flows, or equivalently, $\Pi(G)$ is a singleton. This notion is a strengthening of the notion of amenability, see \cite{Glasner1976}*{II.3} for more details.

\begin{prop}
\label{p:UMPF}
Let $G$ be a Polish group and assume that $M(G) = \widehat{G/G^{*}}$ with $G^{*}$ a closed, co-precompact subgroup of $G$. Then $N(G^{*})$ is strongly amenable and $\Pi(G) = \widehat{G/N(G^{*})}$.
\end{prop}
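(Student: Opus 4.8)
The plan is to treat the two assertions separately, since both follow by assembling results already proved.

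For the strong amenability of $N(G^{*})$, I would argue abstractly from the fact that $G^{*}$ is extremely amenable — this holds by Proposition~\ref{p:ext-am}, as $G/G^{*}$ is precompact and $\widehat{G/G^{*}} = M(G)$ — together with the facts that $G^{*}$ is normal in $N := N(G^{*})$ and that $N/G^{*}$ is compact by Theorem~\ref{th:coalescent}. Let $X$ be a minimal proximal $N$-flow. Extreme amenability of $G^{*}$ gives a nonempty closed set $X^{G^{*}}$ of $G^{*}$-fixed points, and normality of $G^{*}$ makes $X^{G^{*}}$ an $N$-invariant subset of $X$, so $X^{G^{*}} = X$ by minimality; thus the $N$-action factors through the compact group $\Gamma := N/G^{*}$. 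It then suffices to observe that a compact group has no nontrivial proximal flow: given a net $(\gamma_\alpha)_\alpha$ in $\Gamma$ witnessing the proximality of two points $x, y$, a convergent subnet $\gamma_\alpha \to \gamma$ yields $\gamma x = \gamma y$ by continuity of the action, hence $x = y$. So $X$ is a singleton and $N(G^{*})$ is strongly amenable.

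For the computation of $\Pi(G)$, I would invoke Theorem~\ref{th:univ-prox-flow}: $\Pi(G)$ is the maximal $\Gamma$-invariant factor of $M(G)$ for $\Gamma = \Aut(M(G))$. By Theorem~\ref{th:coalescent} this $\Gamma$ is compact, so, by the remark following Theorem~\ref{th:univ-prox-flow}, the factor in question is the orbit space $M(G)/\Gamma = \widehat{G/G^{*}}/\Gamma$, and the remaining task is to identify it with $\widehat{G/N(G^{*})}$ through two mutually inverse $G$-maps. First, $N(G^{*})$ is co-precompact (any finite $F$ with $VFG^{*} = G$ also satisfies $VFN(G^{*}) = G$), so $\widehat{G/N(G^{*})}$ is a compact $G$-flow; the quotient map $G/G^{*} \to G/N(G^{*})$ is uniformly continuous and hence extends to a $G$-map $q \colon \widehat{G/G^{*}} \to \widehat{G/N(G^{*})}$. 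Using the formula $\theta_f(gG^{*}) = gf^{-1}G^{*}$ (for $f \in N(G^{*})$) from the proof of Theorem~\ref{th:coalescent}, one checks that $q \circ \theta_f = q$ on the dense orbit $G/G^{*}$, hence everywhere, so $q$ descends to a surjective $G$-map $\bar q \colon \widehat{G/G^{*}}/\Gamma \to \widehat{G/N(G^{*})}$. Conversely, $\widehat{G/G^{*}}/\Gamma$ is a $G$-flow in which the image $[x_0]$ of the distinguished point $x_0 = G^{*}$ has dense orbit and $G$-stabilizer exactly $N(G^{*})$; the orbit map $g N(G^{*}) \mapsto g \cdot [x_0]$ is therefore a uniformly continuous $G$-embedding of $G/N(G^{*})$ with dense image and extends to a $G$-map $r \colon \widehat{G/N(G^{*})} \to \widehat{G/G^{*}}/\Gamma$. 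Evaluating the compositions $\bar q \circ r$ and $r \circ \bar q$ on the respective dense orbits shows that both are the identity, so $\bar q$ is an isomorphism and $\Pi(G) = \widehat{G/N(G^{*})}$.

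The one point that needs genuine care is the stabilizer computation: one must verify that $gG^{*}$ belongs to the $\Gamma$-orbit of $x_0$, which is $\{fG^{*} : f \in N(G^{*})\}$, precisely when $g \in N(G^{*})$ — this uses $G^{*} \subseteq N(G^{*})$. Everything else is routine (uniform continuity of orbit and quotient maps, density arguments) and the real content has already been supplied by Proposition~\ref{p:ext-am}, Theorem~\ref{th:coalescent}, and Theorem~\ref{th:univ-prox-flow}, so I expect no serious obstacle.
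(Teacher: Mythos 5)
Your proof is correct and follows essentially the same route as the paper: strong amenability of $N(G^{*})$ is deduced from the extreme amenability of $G^{*}$ together with the compactness of $N(G^{*})/G^{*}$ (the paper phrases this as: $M(N(G^{*})) = N(G^{*})/G^{*}$ is the translation flow on a compact group, hence distal, hence has no nontrivial proximal factor), and $\Pi(G) = M(G)/\Aut(M(G))$ via Theorem~\ref{th:univ-prox-flow} is identified with $\widehat{G/N(G^{*})}$ by extending the natural uniform isomorphism $G/G^{*} \big/ (N(G^{*})/G^{*}) \to G/N(G^{*})$ to completions. Your explicit construction of the mutually inverse $G$-maps is just an unpacking of that last identification.
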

\begin{proof}
  As $G^{*}$ is extremely amenable (by Theorem~\ref{t:met1}), the $N(G^{*})$-flow $N(G^{*})/G^{*}$ is the universal minimal flow of $N(G^{*})$. This flow, being the translation on a compact group, is distal. As every proximal factor of a distal flow is trivial (see, for example, \cite{Glasner1976}*{II, Corollary 1.3}), this implies that $N(G^{*})$ is strongly amenable.

  For the second statement, note that, by Theorem~\ref{th:coalescent} and Theorem~\ref{th:univ-prox-flow},
  \[
  \Pi(G) = M(G) / \Aut(M(G)) = \widehat{G/G^*} \big / (N(G^*)/G^*) = \widehat{G/N(G^*)}.
  \]
To see why the last equality holds, observe that the natural map $G/G^* \big / (N(G^*)/G^*) \to G/N(G^*)$ is a uniform isomorphism and therefore extends to the completions. As the group $N(G^*)/G^*$ is compact and acts by isometries, the completion of $G/G^* \big / (N(G^*)/G^*)$ is isomorphic to $\widehat{G/G^*} \big / (N(G^*)/G^*)$.
\end{proof}

From Proposition~\ref{p:UMPF}, we directly deduce the following corollary.
\begin{cor}
\label{c:SA1}
Let $G$ be a Polish group and assume that $M(G) = \widehat{G/G^{*}}$ with $G^{*}$ a closed, co-precompact subgroup of $G$. Then $G$ is strongly amenable iff $G^{*}$ is normal in $G$ iff $M(G)$ is a compact group quotient of $G$.
\end{cor}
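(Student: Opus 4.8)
The plan is to read everything off Proposition~\ref{p:UMPF}, which identifies the universal minimal proximal flow as $\Pi(G)=\widehat{G/N(G^{*})}$. The only elementary observation I need is that, for a closed subgroup $H\le G$, the completion $\widehat{G/H}$ is a one-point flow exactly when $H=G$: indeed $G/H$ embeds as a dense, nonempty subspace of $\widehat{G/H}$, so if the latter is a singleton so is $G/H$. Applying this with $H=N(G^{*})$ gives the chain of equivalences: $G$ is strongly amenable $\iff$ $\Pi(G)$ is trivial $\iff$ $\widehat{G/N(G^{*})}$ is a singleton $\iff$ $N(G^{*})=G$ $\iff$ $G^{*}$ is normal in $G$. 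This settles the first ``iff''.

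For the second ``iff'', first suppose $G^{*}$ is normal. Then $N(G^{*})=G$, and by the remark following Lemma~\ref{l:homog-Gmap} (or by Lemma~\ref{l:fixed-pts}: every point of $G/G^{*}$ is then $G^{*}$-fixed, so $G/G^{*}$ is closed as well as dense in $\widehat{G/G^{*}}$) the quotient $G/G^{*}=N(G^{*})/G^{*}$ is already compact. Hence $M(G)=\widehat{G/G^{*}}=G/G^{*}$ is a compact topological group --- the quotient of the Polish group $G$ by its closed normal subgroup $G^{*}$ --- with $G$ acting on it by left translations through the quotient homomorphism. Conversely, suppose the $G$-flow $M(G)$ is a compact group $K$ on which $G$ acts by left translations through a continuous homomorphism $\phi\colon G\to K$ (whose image is automatically dense, as $G$-orbits in $M(G)$ are dense). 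Fix a compatible left-invariant metric on $K$, which exists since $K$ is compact metrizable; then every element of $G$ acts on $K$ isometrically for this metric, so $M(G)$ is a distal flow. Since each minimal proximal $G$-flow is a factor of $M(G)$ and, by \cite{Glasner1976}*{II, Corollary~1.3}, every proximal factor of a distal flow is trivial, it follows that $\Pi(G)$ is a singleton; thus $G$ is strongly amenable, and $G^{*}$ is normal by the first equivalence.

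I expect the only point requiring genuine care to be the precise meaning of the third condition. The statement ``$M(G)$ is a compact group'' should be read as: the $G$-flow $M(G)$ is isomorphic to a compact group with $G$ acting by translations --- precisely the structure produced in the normal case above --- and with that reading the argument closes the loop. If one prefers not to rely on this convention, one can replace the distality step by the direct remark that, since $G$ acts on $K$ by left translations via $\phi$, the stabilizer in $G$ of \emph{any} point of $K$ equals $\ker\phi$; hence $G^{*}=\ker\phi$, which is normal. Apart from this bookkeeping, the corollary is an immediate consequence of Proposition~\ref{p:UMPF}.
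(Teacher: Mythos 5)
Your proposal is correct and is exactly the ``direct deduction'' from Proposition~\ref{p:UMPF} that the paper intends (the paper states the corollary without written proof): strong amenability $\iff$ $\Pi(G)=\widehat{G/N(G^{*})}$ is a singleton $\iff$ $N(G^{*})=G$, plus the observation that normality of $G^{*}$ makes $G/G^{*}$ a compact group and, conversely, that a compact-group flow is distal and hence has no nontrivial proximal factors. All the supporting details you fill in (density of $G/H$ in $\widehat{G/H}$, compactness of $N(G^{*})/G^{*}$, the distality argument already used in the proof of Proposition~\ref{p:UMPF}) are accurate.
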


%To make sense of the last condition above, note that $M(G)$ admits a natural semigroup structure induced by the semigroup structure on the Samuel compactification $S(G)$, see for instance \cite{deVries}*{IV.3}

As a consequence of the above Corollary, any strongly amenable Polish group such that $M(G)$ is metrizable with a generic orbit is uniquely ergodic in the sense of \cite{Angel2012}, as in that case, the unique invariant probability measure on $M(G)$ is the Haar measure. 

\subsection{A characterization of strong amenability in terms of invariant measures}
As indicated previously, strong amenability for a topological group $G$ is a strengthening of the notion of amenability. However, no equivalent reformulation of it in terms of the existence of probability measures on $G$-flows with certain properties seems to be known. In this subsection, we provide such a characterization under the assumption that $M(G) = \widehat{G/G^{*}}$ with $G^{*}$ closed and co-precompact.  

Let $\mcM(X)$ (resp. $\mcM _{\mathrm{fin}}(X)$) denote the set of all Borel (resp. finitely supported) probability measures on $X$. Recall that $\mcM(X)$ is compact if equipped with the weak$^*$ topology, and that its unique compatible uniformity is generated by the family $\mcU_X$ consisting of all finite intersections of closed sets of the form
\[
\big\{ (\mu,\nu) \in \mcM(X)^{2} : \Big|\int_{X}f \ud\mu - \int_{X}f \ud\nu \Big|\leq \varepsilon \big\},
\]
where $f$ is a continuous function on $X$ and $\varepsilon > 0$.

\begin{prop}\label{p:strongmes}
Let $G$ be a Polish group and assume that $M(G) = \widehat{G/G^{*}}$ with $G^{*}$ a closed, co-precompact subgroup of $G$. Then the following are equivalent: 
\begin{enumerate}
\item \label{i:4} $G$ is strongly amenable;
\item \label{i:5} For every $G$-flow $X$ and $\eps \in \mcU_X$, there exists $\nu \in \mcM _{\mathrm{fin}}(X)$ such that 
\begin{equation*} % \label{eq:1}
\forall g_1, g_2 \in G \quad (g_1 \cdot \nu, g_2 \cdot \nu) \in \eps.
\end{equation*}
\end{enumerate}
\end{prop}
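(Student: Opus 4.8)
The plan is to prove the two implications \ref{i:4}$\Rightarrow$\ref{i:5} and \ref{i:5}$\Rightarrow$\ref{i:4} separately, treating the second one by contraposition and exploiting the explicit description $\Pi(G) = \widehat{G/N(G^*)}$ provided by Proposition~\ref{p:UMPF} together with Corollary~\ref{c:SA1}.

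For \ref{i:5}$\Rightarrow$\ref{i:4} I would assume $G$ is \emph{not} strongly amenable. By Corollary~\ref{c:SA1} the subgroup $G^*$ is then not normal, so $N(G^*) \subsetneq G$ and, by Proposition~\ref{p:UMPF}, the flow $X := \Pi(G) = \widehat{G/N(G^*)}$ is non-trivial, minimal and proximal. The one external ingredient here is that $\Pi(G)$ is in fact \emph{strongly proximal} — every $\mu \in \mcM(X)$ has a point mass in the closure of its $G$-orbit in $\mcM(X)$ — which is part of the basic theory of proximal flows (see \cite{Glasner1976}). Granting this, fix two distinct points $a, b \in X$ and $f \in C(X)$ with $f(a) = 0$, $f(b) = 1$, and let $\eps_0 \in \mcU_X$ be the entourage attached to $f$ and $\varepsilon = 1/2$. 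For an arbitrary $\nu \in \mcM_{\mathrm{fin}}(X)$, strong proximality gives some $x$ with $\delta_x \in \overline{G \cdot \nu}$; since $X$ is minimal and $y \mapsto \delta_y$ is a $G$-equivariant homeomorphism onto a closed subset of $\mcM(X)$, one gets $\overline{G \cdot \nu} \supseteq \overline{G \cdot \delta_x} = \set{\delta_y : y \in X}$, hence $\delta_a, \delta_b \in \overline{G \cdot \nu}$. So there are $g_1, g_2 \in G$ with $\int_X f \ud(g_1 \cdot \nu) < 1/4$ and $\int_X f \ud(g_2 \cdot \nu) > 3/4$, whence $(g_1 \cdot \nu, g_2 \cdot \nu) \notin \eps_0$; thus \ref{i:5} fails for this $X$ and $\eps_0$.

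For \ref{i:4}$\Rightarrow$\ref{i:5}, let $X$ be a $G$-flow and $\eps \in \mcU_X$, say $\eps = \bigcap_{j \le N}\set{(\mu,\nu) : \big|\int_X f_j \ud\mu - \int_X f_j \ud\nu\big| \le \varepsilon}$ with $f_j \in C(X)$. First I would reduce to the case where $X$ is minimal: replacing $X$ by a minimal subflow $X_0$, any $\nu \in \mcM_{\mathrm{fin}}(X_0) \subseteq \mcM_{\mathrm{fin}}(X)$ that works inside $X_0$ also works inside $X$, since $g \cdot \nu$ stays supported on $X_0$ and the $f_j$ are integrated there only. Now, by Corollary~\ref{c:SA1}, $M(G) = \widehat{G/G^*}$ is (as $G^*$ is normal) a compact metrizable group $K$ on which $G$ acts by left translations through a continuous homomorphism $q \colon G \to K$ with dense range; fix a left-invariant compatible metric $d$ and the Haar measure $m$ on $K$, and a factor map $\pi \colon K = M(G) \to X$ (available because $X$ is minimal). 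Set $h_j = f_j \circ \pi \in C(K)$ and choose $\delta > 0$ with $d(k,k') < \delta \implies |h_j(k) - h_j(k')| < \varepsilon/2$ for all $j$. Partitioning $K$ into non-empty Borel sets $B_1, \dots, B_n$ of diameter $< \delta$ and picking $k_i \in B_i$, let $\sigma = \sum_i m(B_i)\,\delta_{k_i} \in \mcM_{\mathrm{fin}}(K)$. Using left-invariance of $d$ (so $d(kk_i, kx) = d(k_i,x) < \delta$ for $x \in B_i$) and of $m$ (so $k \cdot m = m$), one obtains $\big|\int_K h_j \ud(k \cdot \sigma) - \int_K h_j \ud m\big| < \varepsilon/2$ for every $k \in K$, hence $\big|\int_K h_j \ud(k \cdot \sigma) - \int_K h_j \ud(k' \cdot \sigma)\big| < \varepsilon$ for all $k, k' \in K$ and all $j$. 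Finally $\nu := \pi_* \sigma \in \mcM_{\mathrm{fin}}(X)$ works: since $\pi$ is a $G$-map, $g \cdot \nu = \pi_*(q(g) \cdot \sigma)$, so $\int_X f_j \ud(g \cdot \nu) = \int_K h_j \ud(q(g) \cdot \sigma)$ and therefore $(g_1 \cdot \nu, g_2 \cdot \nu) \in \eps$ for all $g_1, g_2 \in G$.

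The main obstacle is the implication \ref{i:4}$\Rightarrow$\ref{i:5}, and specifically the naive (and failing) strategy of taking a $G$-invariant measure on $X$ — which exists since strong amenability implies amenability — and just approximating it by finitely supported ones: $G$ does not act by isometries on $\mcM(X)$, so almost-invariance is destroyed under approximation. What unlocks the argument is that under the standing hypothesis $M(G) = \widehat{G/G^*}$ \emph{together with} strong amenability, $M(G)$ is a compact \emph{group}; there one can hand-build a genuinely $\eps$-almost-invariant finitely supported measure, using that translations are isometries for a left-invariant metric and that Haar measure supplies the weights, and then transport it to $X$ along a factor map $M(G) \to X$. The reduction to a minimal $X$ is exactly what makes such a factor map available.
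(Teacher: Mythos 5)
Your proof is correct, but the two halves relate differently to the paper's argument. The implication \ref{i:4}$\Rightarrow$\ref{i:5} is essentially the paper's proof with the final ``any finitely supported measure close enough to the push-forward of Haar measure will do'' spelled out: the paper pushes Haar measure forward to $X$ and approximates there, you approximate on the compact group $K=\widehat{G/G^*}$ first and then push forward along a factor map; both reductions (to minimal $X$, then through $K$ via Corollary~\ref{c:SA1}) are identical. For \ref{i:5}$\Rightarrow$\ref{i:4} you argue by contraposition through the concrete flow $\Pi(G)=\widehat{G/N(G^*)}$ supplied by Proposition~\ref{p:UMPF}, whereas the paper argues directly: given any proximal flow and the almost-invariant finitely supported measures from \ref{i:5}, it collapses them to Dirac masses and extracts a fixed point by compactness. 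The paper's route is more economical and, as the authors note, does not use the hypothesis $M(G)=\widehat{G/G^*}$ at all for this direction; your route consumes that hypothesis (via Proposition~\ref{p:UMPF}) where it is not needed. One caveat: the ``external ingredient'' you invoke is misstated. It is \emph{not} part of the basic theory that every minimal proximal flow is strongly proximal --- strong proximality is strictly stronger, and Glasner has examples of proximal flows that are not strongly proximal. Fortunately you only apply the collapsing to a \emph{finitely supported} $\nu$, and for such measures proximality alone suffices: in a proximal flow any finite tuple $(x_1,\dots,x_n)$ has a diagonal point in its orbit closure in $X^n$ (iterate the defining property through the enveloping semigroup), so some net $g_\alpha\cdot\nu$ converges to a Dirac mass. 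This is exactly the fact the paper uses, and with that substitution your argument is sound.
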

\begin{proof}
\ref{i:5}$\Rightarrow$\ref{i:4}. Let $G\actson X$ be proximal. For $\eps \in \mcU_X$, let 
\[
A_\eps = \set{\mu \in \mcM(X) : \forall g_1, g_2 \in G \ (g_1 \cdot \mu, g_2 \cdot \mu) \in \eps}.
\] 
This set is $G$-invariant and closed in $\mcM(X)$, and by hypothesis, contains a finitely supported measure $\nu$. By proximality, we can find a net $(g_{\alpha})_{\alpha}$ of elements in $G$ and $x \in X$ such that for every $y$ in the support of $\nu$, $g_{\alpha}\cdot y \to x$. Thus, $g_{\alpha}\cdot \nu \to \delta_{x}$, the Dirac measure at $x$, and $A_\eps \cap X$ is not empty (here, $X$ is identified with the set of Dirac measures in $\mcM(X)$). By compactness, $\bigcap_\eps A_\eps \cap X\neq \emptyset$ and it remains to observe that any element of this set is fixed by $G$.

\ref{i:4}$\Rightarrow$\ref{i:5}. Let $G\actson X$ be a $G$-flow, which, without loss of generality, we may assume to be minimal. Let $\eps \in \mcU_X$ and let $K$ denote the compact group $G/G^{*}$. By Corollary~\ref{c:SA1}, the action of $G$ factors through $K$ and it suffices to find a finitely supported Borel probability measure $\nu$ such that for all $k_1, k_2 \in K$, $(k_1 \cdot \nu, k_2 \cdot \nu) \in \eps$. This is easy: any finitely supported measure close enough to the push-forward of the Haar measure of $K$ will do.
\end{proof}

Note that the assumption $M(G) = \widehat{G/G^{*}}$ is not used in the proof of \ref{i:5}$\Rightarrow$\ref{i:4}.

\subsection{Strong amenability for automorphism groups}
Another motivation for the material of the previous section was to extract a Ramsey-type statement from strong amenability when $G$ is a subgroup of $S_{\infty}$. This is the purpose of what follows. 

\begin{lemma}
Let $G = \Aut(\bF)$ for an $\omega$-categorical, Fraïssé structure $\bF$ and assume that $M(G) = \widehat{G/G^{*}}$ with $G^{*}$ a closed, co-precompact subgroup of $G$. Assume also that $G^{*} = \Aut(\bF^{*})$ with $\bF^{*}=(\bF, R_1, \ldots, R_k)$ a Fraïssé expansion of $\bF$ by finitely many relation symbols. Then $N(G^{*})/G^{*}$ is finite.
\end{lemma}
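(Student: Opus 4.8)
The plan is to reduce the finiteness of $N(G^{*})/G^{*}$ to the Ryll--Nardzewski theorem. Since $\bF$ is $\omega$-categorical, the action $G \actson \bF$ is oligomorphic, and since $G^{*}$ is co-precompact in $G$, the action $G^{*} \actson \bF$ is oligomorphic as well (by the criterion recalled in the introduction). The strategy is then to bound the number of ``conjugates'' of the expansion $\bF^{*}$ by normalizing elements, using that conjugation produces, in each coordinate, a $G^{*}$-invariant relation of fixed arity, and that there are only finitely many such relations.

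First I would set up the action. Let $R_{i} \sub \bF^{n_{i}}$ be the interpretation of the $i$-th relation symbol, and let $G$ act coordinatewise on each power $\bF^{n}$; for $g \in G$ write $g[R_{i}] \sub \bF^{n_{i}}$ for the image of $R_{i}$ under $g$. The basic observation is that $\Aut\big(\bF, g[R_{1}], \dots, g[R_{k}]\big) = g G^{*} g^{-1}$, so that $g$ lies in $N(G^{*})$ precisely when the expansion $\big(\bF, g[R_{1}], \dots, g[R_{k}]\big)$ has the same automorphism group as $\bF^{*}$. In particular, for $g \in N(G^{*})$ each relation $g[R_{i}]$ is invariant under $g G^{*} g^{-1} = G^{*}$.

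Next I would invoke oligomorphicity: for every $n$, the group $G^{*}$ has only finitely many orbits on $\bF^{n}$, hence only finitely many invariant subsets of $\bF^{n}$. Therefore the assignment $g \mapsto \big(g[R_{1}], \dots, g[R_{k}]\big)$ takes only finitely many values on $N(G^{*})$. Finally, I would check that two elements $g, g' \in N(G^{*})$ have the same image under this map if and only if $(g^{-1}g')[R_{i}] = R_{i}$ for all $i$, i.e.\ if and only if $g^{-1} g' \in \Aut(\bF^{*}) = G^{*}$; so the map descends to a well-defined injection of $N(G^{*})/G^{*}$ into a finite set, which yields the conclusion.

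I do not expect a genuine obstacle here: the argument is essentially bookkeeping about coordinatewise group actions together with the orbit/invariant-relation dictionary. The one point that needs to be spelled out explicitly is the deduction that $G^{*} \actson \bF$ is oligomorphic, which uses the $\omega$-categoricity of $\bF$ in combination with the co-precompactness of $G^{*}$; note that this is the only hypothesis of the lemma actually used.
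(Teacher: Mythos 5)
Your proof is correct and follows essentially the same route as the paper's: the coset $gG^{*}$ is determined by the tuple $(g\cdot R_{1},\dots,g\cdot R_{k})$, for $g\in N(G^{*})$ these relations are $G^{*}$-invariant, and oligomorphicity of $G^{*}$ (from $\omega$-categoricity plus co-precompactness) gives only finitely many invariant relations in each arity. You have merely spelled out the bookkeeping (the identity $\Aut(\bF,g[R_{1}],\dots,g[R_{k}])=gG^{*}g^{-1}$ and the injectivity of the induced map on cosets) that the paper leaves implicit.
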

\begin{proof}
  Note that the coset $gG^*$, $g \in G$ is determined by the relations $g \cdot R_1, \ldots g \cdot R_k$. If $g \in N(G^*)$, then $g \cdot R_i$ is a $G^*$-invariant relation. Since $G^*$ is co-precompact and the action of $G$ on $\bF$ is oligomorphic by assumption, this is also the case for the action of $G^*$, i.e. there are only finitely many orbits for the natural action of $G^*$ on $F^k$, for all $k$. This implies that there are only finitely many $G^*$-invariant relations in every arity, whence we conclude that $N(G^*)/G^*$ must be finite.
\end{proof}

We now have the following corollary of Proposition~\ref{p:UMPF}.
\begin{cor}
Under the assumptions of the previous lemma, $G$ is strongly amenable iff $M(G)$ is finite.  
\end{cor}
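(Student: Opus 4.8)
The plan is to read off both implications from the previous lemma together with Corollary~\ref{c:SA1} and the computation of the universal minimal proximal flow in Proposition~\ref{p:UMPF}; no new ideas are needed, only some bookkeeping about completions of finite homogeneous spaces.

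First I would treat the direction ``$G$ strongly amenable $\Rightarrow$ $M(G)$ finite''. By Corollary~\ref{c:SA1}, strong amenability means exactly that $G^{*}$ is normal in $G$, so $N(G^{*}) = G$. The previous lemma then gives that $G/G^{*} = N(G^{*})/G^{*}$ is finite. Since $G/G^{*}$ is then a finite discrete space, its completion is itself, so $M(G) = \widehat{G/G^{*}} = G/G^{*}$ is finite.

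For the converse, ``$M(G)$ finite $\Rightarrow$ $G$ strongly amenable'', I would first note that $G/G^{*}$ embeds as a dense subspace of $M(G) = \widehat{G/G^{*}}$; as $M(G)$ is finite, this forces $G/G^{*} = M(G)$, so $[G:G^{*}]$, and a fortiori $[G:N(G^{*})]$, is finite. By Proposition~\ref{p:UMPF}, $\Pi(G) = \widehat{G/N(G^{*})} = G/N(G^{*})$, a finite (discrete) minimal proximal $G$-flow. Now I claim every finite proximal $G$-flow is trivial: if $x \neq y$ were two points of it, proximality would produce a net $(g_{\alpha})_{\alpha}$ in $G$ with $\lim_{\alpha} g_{\alpha}\cdot x = \lim_{\alpha} g_{\alpha}\cdot y$, which in a finite discrete flow means $g_{\alpha}\cdot x = g_{\alpha}\cdot y$ for $\alpha$ large; since each $g_{\alpha}$ acts as a bijection, this yields $x = y$, a contradiction. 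Hence $\Pi(G)$ is a singleton, i.e., $G$ is strongly amenable.

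I do not expect any real obstacle here: the substantive content is already packaged in the previous lemma, in Corollary~\ref{c:SA1}, and in Proposition~\ref{p:UMPF}, and the only points to be careful about are the elementary facts that a dense subset of a finite discrete space is the whole space and that the completion of a finite uniform space is itself.
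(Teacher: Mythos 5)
Your argument is correct and follows exactly the route the paper intends (the paper states the corollary without proof, as a consequence of the preceding lemma, Corollary~\ref{c:SA1}, and Proposition~\ref{p:UMPF}): strong amenability gives $N(G^{*})=G$ and hence $M(G)=N(G^{*})/G^{*}$ finite, while finiteness of $M(G)$ forces $\Pi(G)=G/N(G^{*})$ to be a finite proximal flow, hence a point. Your bookkeeping about dense subsets and completions of finite discrete quotients is accurate, so no gaps.
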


\begin{proof}
Since any finite flow is distal, a topological group with a finite universal minimal flow must be strongly amenable. The other direction is obvious from the previous lemma and Proposition~\ref{p:UMPF}. 
\end{proof}

One reason for us to study strong amenability was to determine its Ramsey-theoretic content. When the hypotheses above are satisfied, the previous corollary makes this possible via the work \cite{Mueller2014p} of M\"uller and Pongr\'acz, where it is shown that $M(G)$ is finite with size at most $d$ iff in $\Age(\bF)$, Ramsey degrees for embeddings are all finite and at most $d$. In the slightly more general case where $\bF^{*}$ is a Fraïssé expansion with infinitely many symbols, this is still doable thanks to Proposition~\ref{p:strongmes}, but we do not detail this here.
However, in contrast with amenability (see \cite{Moore2013} for details), it is unclear what can be said without an assumption on $M(G)$.

\section{Examples}
\label{sec:examples}
In this section, we calculate the universal minimal proximal flows for some concrete examples. We will consider subgroups $G \leq S_\infty$ that are given as automorphism groups of some Fraïssé limit $\bF$, and will assume some familiarity with the framework of \cite{Kechris2005}. We will cover the cases where $\bF$ is a homogeneous graph or tournament, but thanks to the results of \cite{Jasinski2014}, a similar strategy can be used for all homogeneous directed graphs. In view of Theorem~\ref{t:met1}, in order to have $M(G) = \widehat{G/G^*}$, it is necessary and sufficient that there exist a closed, co-precompact, extremely amenable subgroup $G^* \leq G$ such that the $G$-flow $\widehat{G/G^*}$ is minimal. In the setting of Fraïssé limits, this is equivalent to the existence of a precompact, homogeneous expansion $\bF^*$ of $\bF$ that has the Ramsey property and the expansion property \cites{Kechris2005,NguyenVanThe2013}.

\subsection{Betweenness relations}
We first consider the (rather common) situation where the Ramsey expansion is obtained by adding a linear order $<$ to the signature and taking the limit of an appropriate Fraïssé class. Then one can naturally define the corresponding betweenness relation $B$ by
\begin{equation} \label{eq:bet}
B(x,y,z) \iff (x<y<z \, \vee \, z<y<x).
\end{equation}
This relation is relevant for us because in a number of cases, the normalizer of $\Aut(\bF, <)$ in $\Aut(\bF)$ is $\Aut(\bF, B)$. More precisely, we have the following.
\begin{lemma}\label{l:B}
  Assume that $\bF$ is a homogeneous structure and that there is an order Fraïssé expansion $\bF^* = (\bF, <)$ so that $M(G) = \widehat{G/G^{*}}$, where $G=\Aut(\bF)$ and $G^{*}=\Aut(\bF^*)$. Assume also that there exist $u, v\in \bF^*$ such that $u < v$ and
\begin{equation} \label{eq:3}
\begin{split}
  \forall x < y \in \bF^* \ \exists x_{0},...,x_{n+1} \in \bF^* \quad \big( &x_{0}=x \And x_{n+1}=y \And \\
  & \exists g_{0}, \ldots, g_n \in G^{*} \forall i \ g_{i}(u)=x_{i} \And g_{i}(v)=x_{i+1} \big).
\end{split}
\end{equation}
Then $N(G^{*})=\Aut(\bF, B)$, where $B$ is the betweenness relation defined by \eqref{eq:bet}. 
\end{lemma}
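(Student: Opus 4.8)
The plan is to work in the standard representation from \cite{Kechris2005}: since $\bF^{*}=(\bF,<)$ is an order expansion, the flow $\widehat{G/G^{*}}$ identifies with a $G$-invariant space $X$ of linear orderings on $\bF$, with $G$ acting by $a\,(g\cdot\prec)\,b\iff g^{-1}a\prec g^{-1}b$, with distinguished point $<$ itself, and with stabilizer of $<$ equal to $G^{*}$. Write $<^{r}$ for the reverse of $<$. Two facts will be used freely: the reversal map $\prec\mapsto\prec^{r}$ commutes with the $G$-action, and both $<$ and $<^{r}$ are $G^{*}$-invariant (every element of $G^{*}=\Aut(\bF,<)$ preserves both).

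First I would translate membership in $\Aut(\bF,B)$ into a statement about orderings. For $g\in G$, a direct computation shows that the betweenness relation of the ordering $g^{-1}\cdot<$ is the relation $(x,y,z)\mapsto B(gx,gy,gz)$; hence $g\in\Aut(\bF,B)$ exactly when $g^{-1}\cdot<$ has the same betweenness relation as $<$. Since a linear order on a set with more than one point is determined by its betweenness relation up to reversal, this holds iff $g^{-1}\cdot<\in\{<,<^{r}\}$, and, by $G$-equivariance of reversal, iff $g\cdot<\in\{<,<^{r}\}$. Thus $\Aut(\bF,B)=\{g\in G:g\cdot<\in\{<,<^{r}\}\}$.

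The core of the proof — and the only place where hypothesis \eqref{eq:3} is used — is the claim that the only $G^{*}$-invariant orderings lying in $X$ are $<$ and (possibly) $<^{r}$. Let $\prec\in X$ be $G^{*}$-invariant; since $u<v$, either $u\prec v$ or $v\prec u$. If $u\prec v$, I would show $\prec\,=\,<$: given any $x<y$, apply \eqref{eq:3} to produce $x=x_{0},\dots,x_{n+1}=y$ and $g_{i}\in G^{*}$ with $g_{i}(u)=x_{i}$ and $g_{i}(v)=x_{i+1}$; $G^{*}$-invariance of $\prec$ together with $u\prec v$ gives $x_{i}\prec x_{i+1}$ for all $i$, hence $x\prec y$; so $<\,\subseteq\,\prec$, and since both are linear orders on $\bF$, $\prec\,=\,<$. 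If instead $v\prec u$, then $\prec^{r}$ is again $G^{*}$-invariant and satisfies $u\prec^{r}v$, so the previous case gives $\prec^{r}=\,<$, i.e.\ $\prec\,=\,<^{r}$. Note this argument never needs $<^{r}\in X$; if no $G^{*}$-invariant ordering of $X$ satisfies $v\prec u$, the claim simply says $<$ is the only one.

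Finally I would assemble the pieces. By Lemma~\ref{l:fixed-pts} (applicable because $\widehat{G/G^{*}}$ is minimal and $G^{*}$ is co-precompact), the $N(G^{*})$-orbit of $<$ coincides with the set of $G^{*}$-fixed points of $X$, which by the claim equals $\{<,<^{r}\}\cap X$. Since $g\cdot<\in X$ for every $g\in G$, this gives $\{g:g\cdot<\in\{<,<^{r}\}\}=\{g:g\cdot<\in N(G^{*})\cdot<\}$, and the latter set is precisely $N(G^{*})$, because the stabilizer of $<$ is $G^{*}\leq N(G^{*})$. Combined with the second paragraph, this yields $N(G^{*})=\Aut(\bF,B)$. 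The main obstacle is the claim of the third paragraph: it is the only step that uses the structure of $\bF$, and condition \eqref{eq:3} is tailored precisely to force it; everything else is formal bookkeeping with the $G$-action on orderings and an application of Lemma~\ref{l:fixed-pts}.
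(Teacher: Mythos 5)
Your proof is correct. The central computation---using \eqref{eq:3} to propagate the orientation of the pair $(u,v)$ along the chain $x_0,\dots,x_{n+1}$---is the same as in the paper, but you package it differently. The paper works directly with an element $g\in N(G^{*})$: since $gg_ig^{-1}\in G^{*}$ preserves $<$, one gets $g\cdot u < g\cdot v \iff g\cdot x_i < g\cdot x_{i+1}$ for each $i$, hence $g\cdot <$ is $<$ or its reverse; the converse inclusion $\Aut(\bF,B)\subseteq N(G^{*})$ is immediate from the observation that $G^{*}$ has index at most $2$ (hence is normal) in $\Aut(\bF,B)$. You instead abstract the computation into the claim that $<$ and $<^{r}$ are the only $G^{*}$-invariant linear orderings, and then identify $N(G^{*})\cdot <$ with the set of $G^{*}$-fixed points of $\widehat{G/G^{*}}$ via Lemma~\ref{l:fixed-pts}. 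The two are essentially equivalent---for $g\in N(G^{*})$ the ordering $g\cdot <$ is precisely a $G^{*}$-invariant ordering, so your claim applied to it recovers the paper's conjugation argument---but your route imports the dynamical machinery of Lemma~\ref{l:fixed-pts} (minimality of $\widehat{G/G^{*}}$ and the isometry lemma behind it) where the paper's argument is self-contained group combinatorics. In exchange, your formulation makes explicit that hypothesis \eqref{eq:3} is exactly a rigidity statement about $G^{*}$-invariant orderings, which is arguably more transparent. One small remark: for the inclusion $\Aut(\bF,B)\subseteq N(G^{*})$ you invoke the nontrivial direction of Lemma~\ref{l:fixed-pts}, whereas the index-two observation gives that inclusion for free.
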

\begin{proof}
  Observe that an element $g \in G$ is in $\Aut(\bF, B)$ iff it preserves $<$ or it \df{reverses} $<$, i.e., for all $x < y \in \bF$, $g \cdot x > g \cdot y$, so in particular, $G^*$ has index at most $2$ in $\Aut(\bF, B)$ and is normal there.
  
  Conversely, suppose that $g\in N(\Aut(\bF^*))$ and let $u$ and $v$ be as in the hypothesis. Let $x < y \in \bF$ be arbitrary and let $x_0, \ldots, x_{n+1}$ be as in \eqref{eq:3}. Then for every $i \leq n$,
  \[
  g \cdot u < g \cdot v \iff (g g_i g^{-1}) g \cdot u < (g g_i g^{-1})g \cdot v
    \iff g \cdot x_i < g \cdot x_{i+1},
  \]
  and by transitivity of the ordering, we obtain that
  \[
  \forall x, y\in \bF \quad g \cdot u < g \cdot v \iff g \cdot x < g \cdot y,
  \]
i.e., the ordering $g \cdot <$ is either equal to $<$ or to its inverse, depending on whether $g \cdot u < g \cdot v$ or not. In particular, this means that $g \in \Aut(\bF, B)$.
\end{proof}

Now we consider the case where the universal minimal flow of $G$ is the space of \emph{all} linear orderings on $\bF$. Denote by $\mathrm{BLO}(\bF)$ the set of all betweenness relations on $\F$ that are induced by some linear ordering in $\LO(\bF)$ by the formula \eqref{eq:bet}. Then it is not difficult to check that $\mathrm{BLO}(\bF)$ is a closed subset of $2^{\bF^3}$ and the completion of $G/\Aut(\bF, B)$ is isomorphic to $\mathrm{BLO}(\bF)$ (equipped with the logic action of $G$). Therefore Proposition~\ref{p:UMPF} yields the following.
\begin{cor}
  Suppose that $\bF$ and $\bF^* = (\bF, <)$ are Fraïssé structures that satisfy the hypothesis of Lemma~\ref{l:B} and suppose moreover that $M(G) = \LO(\bF)$. Then $\Pi(G) = \mathrm{BLO}(\bF)$.
\end{cor}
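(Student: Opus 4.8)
The plan is to combine Lemma~\ref{l:B}, Proposition~\ref{p:UMPF}, and the concrete description of $\widehat{G/\Aut(\bF,B)}$ recorded just above the statement. First I would apply Lemma~\ref{l:B}: since $\bF$ and $\bF^*=(\bF,<)$ are assumed to satisfy its hypothesis, it gives $N(G^*)=\Aut(\bF,B)$, where $G=\Aut(\bF)$, $G^*=\Aut(\bF^*)$, and $B$ is the betweenness relation \eqref{eq:bet}. Next, as $M(G)=\widehat{G/G^*}$ with $G^*$ closed and co-precompact, Proposition~\ref{p:UMPF} applies and yields $\Pi(G)=\widehat{G/N(G^*)}=\widehat{G/\Aut(\bF,B)}$. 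The only thing left is to identify this completion with $\mathrm{BLO}(\bF)$.

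For that identification I would argue as follows. Regard $\LO(\bF)\subseteq 2^{\bF^2}$ and $\mathrm{BLO}(\bF)\subseteq 2^{\bF^3}$; both are closed, hence compact, and carry continuous logic actions of $G$. Let $q\colon\LO(\bF)\to\mathrm{BLO}(\bF)$ send a linear order to the betweenness relation it induces via \eqref{eq:bet}. One checks immediately that $q$ is continuous (for fixed $x,y,z$ the truth value of the induced relation at $(x,y,z)$ is a fixed Boolean combination of finitely many values of the order), $G$-equivariant, and, by definition of $\mathrm{BLO}(\bF)$, surjective. Since $M(G)=\LO(\bF)$ is minimal, the orbit $G\cdot{<}$ is dense in $\LO(\bF)$, so its image $G\cdot B$ is dense in $\mathrm{BLO}(\bF)$; the stabiliser of $B$ in $G$ is exactly $\Aut(\bF,B)$, so $G\cdot B$ is $G$-equivariantly the homogeneous space $G/\Aut(\bF,B)$. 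It then remains to check that the uniformity $\mathrm{BLO}(\bF)$ induces on $G\cdot B$ — two betweenness relations being close when they agree on $\bA^3$ for a large finite $\bA\subseteq\bF$ — is precisely the uniformity \eqref{eq:unif} on $G/\Aut(\bF,B)$. Granting this, $\mathrm{BLO}(\bF)$ is a compact, hence complete, space containing $G/\Aut(\bF,B)$ densely with the right uniformity, so $\widehat{G/\Aut(\bF,B)}\cong\mathrm{BLO}(\bF)$, and therefore $\Pi(G)=\mathrm{BLO}(\bF)$.

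The one point that needs care is the uniformity comparison in the previous paragraph. This is the standard correspondence for logic actions of closed subgroups of $S_\infty$ — already used tacitly in the introduction to identify $\widehat{G/G^*}$ with a space of expansions — and the only twist is that $(\bF,B)$ need not itself be a homogeneous structure; but $\Aut(\bF,B)$ is still the pointwise stabiliser of the single point $B\in 2^{\bF^3}$, so the usual computation of the entourages of \eqref{eq:unif} in terms of agreement on finite sets goes through verbatim. (It follows a posteriori — e.g.\ from coalescence of $M(G)=\LO(\bF)$, which rules out $2$-to-$1$ endomorphisms — that $\Aut(\bF,B)$ strictly contains $G^*$, i.e.\ that reversing $<$ is realised inside $G$; but this is not needed above.)
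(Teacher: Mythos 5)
Your overall route is the same as the paper's: Lemma~\ref{l:B} gives $N(G^*)=\Aut(\bF,B)$, Proposition~\ref{p:UMPF} gives $\Pi(G)=\widehat{G/N(G^*)}=\widehat{G/\Aut(\bF,B)}$, and it remains to identify this completion with $\mathrm{BLO}(\bF)$ --- the step the paper leaves as ``not difficult to check.'' The gap is in your justification of that identification, exactly at the point you flag as needing care. The ``usual computation'' of the entourages \eqref{eq:unif} in terms of agreement on finite sets does \emph{not} go through verbatim for an arbitrary point stabilizer: the inclusion ``close in $G/H$ $\Rightarrow$ the translates of $B$ agree on $\bA^3$'' is automatic, but the reverse inclusion requires that every finite partial isomorphism of $(\bF,B)$ extend to an element of $\Aut(\bF,B)$, i.e., homogeneity of $(\bF,B)$ --- which is precisely why the paper's identification of $\widehat{G/G^*}$ with a space of expansions is stated for \emph{homogeneous} expansions. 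A finite partial isomorphism of $(\bF,B)$ either preserves or reverses $<$ on its domain; in the first case it extends by homogeneity of $(\bF,<)$, but in the second it extends only if some element of $G$ reverses $<$. This is exactly the fact you dismiss in your closing parenthetical as ``not needed'': it is needed, and without it the conclusion fails. Indeed, if no element of $G$ reversed $<$, then $\Aut(\bF,B)=G^*$ and $\widehat{G/\Aut(\bF,B)}=\LO(\bF)$; this cannot be uniformly isomorphic over the orbit to $\mathrm{BLO}(\bF)$, since the unique continuous extension of $g\cdot{<}\mapsto g\cdot B$ is the $2$-to-$1$ map $q$.

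The gap is fillable from the hypotheses. Since $M(G)=\LO(\bF)=\widehat{G/G^*}$, Theorem~\ref{th:coalescent} gives $\Aut(M(G))=N(G^*)/G^*$, and order reversal $R\colon{<}\mapsto{>}$ is a nontrivial element of $\Aut(\LO(\bF))$; hence $R=\theta_f$ for some $f\in N(G^*)$, and such an $f$ necessarily reverses $<$. With $f$ in hand, $(\bF,B)$ \emph{is} homogeneous and your uniformity comparison becomes correct. Alternatively, you can bypass the uniformity computation entirely: by Theorem~\ref{th:univ-prox-flow} and the displayed computation in the proof of Proposition~\ref{p:UMPF}, $\Pi(G)=M(G)/\Aut(M(G))=\LO(\bF)/\set{\id,R}$, and the fibers of $q\colon\LO(\bF)\to\mathrm{BLO}(\bF)$ are exactly the $\set{\id,R}$-orbits, so $q$ induces the desired isomorphism $\Pi(G)\cong\mathrm{BLO}(\bF)$.
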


The last corollary applies to many structures, including the structure in the empty language, the random graph, all Henson graphs, the rational Urysohn space, and the random tournament (see \cite{Kechris2005} for the calculation of the universal minimal flows).

One case where it does \emph{not} apply is the countable generic poset $\bP$. Denote by $\mathrm{LE}(\bP)$ the space of all linear orderings that extend the poset relation $\leq^{\bP}$. This is of course a proper subset of $\mathrm{LO}(\bP)$. It was shown in \cite{Kechris2005} (see the Addendum at the end) that $\mathrm{LE}(\bP)$ is the universal minimal flow of $G=\Aut(\bP)$. We are in the situation of Lemma \ref{l:B}, where $\bF^*$ is of the form $(\bP,\le^\bP,\preceq)$, the Fra\"iss\'e limit of all finite posets endowed with an ordering extending the poset relation.
Denoting as usual by $G^*$ the automorphism group of $(\bP,\le^\bP,\preceq)$, the reasoning of Lemma \ref{l:B} (the converse direction) shows that any $g$ belonging to the normalizer of $G^*$ in $G$ must either preserve or reverse $\preceq$; clearly, in this case, there are no elements of $G$ which reverse $\preceq$, so we obtain that $G^*$ is normal in $G$.
Proposition~\ref{p:UMPF} now tells us that $\Pi(\Aut(\bP)) = M(\Aut(\bP))$, i.e., every minimal flow of $\Aut(\bP)$ is proximal.

Another group for which all minimal flows are proximal is the automorphism group of the countable atomless Boolean algebra (see \cite{Glasner2003a}); we are grateful to one of the referees for pointing this out.

\subsection{Ultrahomogeneous graphs}
We already computed $\Pi(G)$ in the case of the automorphism groups of the infinite complete graph $K_{\N}$, the Henson graphs and the random graph. The remaining cases of countable ultrahomogeneous graphs are, up to a switch of the edges and the non-edges: 

\begin{enumerate}
\item \label{i:1} $I_{n}[K_{\N}]$, made of $n$ many disjoint copies of $K_{\N}$, where $n\in \N$ is fixed;
\item \label{i:2} $I_{\N}[K_{n}]$, made of infinitely many disjoint copies of $K_{n}$, where $n\in \N$ is fixed;
\item \label{i:3} $I_{\N}[K_{\N}]$, made of infinitely many disjoint copies of $K_{\N}$.
\end{enumerate}

\ref{i:1}. $G=S_{n}\ltimes S_{\infty}^{n}$, $S_{n}$ being the symmetric group of $[n]$ acting on $S_{\infty}^{n}$ by permuting the coordinates. In that case, $G^{*}=\Aut(I_{n}[K_{\N}]^{*})$, where $I_{n}[K_{\N}]^{*}$ is obtained by adding $n$ many unary predicates $A^{*}_{i}$, $i\in [n]$ (one for each copy of $K_{\N}$), and by adding a linear ordering $<$ so that each $A^{*} _{i}$ is isomorphic to $(\Q, <)$ and $A^{*}_{0} < A^{*}_{1}<...<A^{*} _{n-1}$. So $G^{*}=\Aut(\Q,<)^{n}$ and $N(G^{*})=S_{n}\ltimes \Aut(\Q,B)^{n}$. Therefore, $\Pi(G)$ is given by the natural action of $S_{n}\ltimes S_{\infty}^{n}$ on $\mathrm{BLO}(\N)^{n}$.  

\ref{i:2}. $G=S_{\infty}\ltimes S_{n} ^{\N}$, where $S_{\infty}$ acts on $S_{n} ^{\N}$ by permuting the coordinates. For convenience, we will see this group as $S_{\infty}\ltimes S_{n}^\Q $. Then, $G^{*}=\Aut(I_{\N}[K_{n}]^{*})$, where $I_{\N}[K_{n}]^{*}$ is obtained by adding a linear ordering that leaves the copies of $K_{n}$ convex and orders the set of copies as $(\Q, <)$. 
The corresponding group $G^{*}$ is $\Aut(\Q, <)$ and corresponds to the set of permutations that only move the copies of $K_{n}$ in an order-preserving way and no non-trivial permutations are allowed within the copies. From this, we see that an element of $N(G^{*})$ may reverse the ordering on the set of copies, and inside each copy may permute the elements according to a common $\sigma \in S_{n}$. Hence, denoting by $\Delta(S_{n}^\Q)$ the diagonal image of $S_{n}$ in $S_{n}^\Q$, we obtain $N(G^{*})=\Aut(\Q, B)\ltimes \Delta(S_{n}^\Q)\subseteq S_{\infty}\ltimes S_{n}^\Q$. Therefore, the flow $\Pi(G)$ can be identified with the natural action of $S_{\infty}\ltimes S_{n}^\Q$ on $\mathrm{BLO}(\N)\times (S_{n}^\Q  /\Delta(S_{n}^\Q))$.

\ref{i:3}. $G=S_{\infty}\ltimes S_{\infty} ^{\N}$, which, as previously, we will see as $S_{\infty}\ltimes S_{\infty}^\Q$ where $S_{\infty}$ acts on $S_{n}^\Q$ by permuting the coordinates. Then $G^{*}=\Aut(I_{\N}[K_{\N}]^{*})$, where $I_{\N}[K_{\N}]^{*}$ is obtained by adding a convex linear ordering $<^*$, so that $G^{*}=\Aut(\Q, <) \ltimes \Aut(\Q,<)^\Q$. An element $g$ of $N(G^{*})$ may reverse the ordering on the set of parts. Moreover, if $x<^{*}y$ are in the same copy of $K_{\N}$, then $g(x)<^{*}g(y)$ iff $g$ is order-preserving on each copy of $K_{\N}$, and $g(y)<^{*}g(x)$ iff $g$ reverses $<^{*}$ on each copy of $K_{\N}$.
Therefore, denoting by ($\mathrm{LO}(\N)^{\N}/\sim$) the set obtained from $\mathrm{LO}(\N)^{\N}$ by identifying each element $(<_{i})_{i}$ with its reverse version $(>_{i})_{i}$, the flow $\Pi(G)$ can be identified with the natural action of $S_{\infty}\ltimes S_{\infty}^{\N}$ on the set $\mathrm{BLO}(\N)\times(\mathrm{LO}(\N)^{\N}/\sim)$. 

\subsection{Ultrahomogeneous tournaments}
The three countable ultrahomogeneous tournaments are $(\Q,<)$, the random tournament, and the dense local order $\bS(2)$. The first two cases were considered in the previous sections, so the only remaining case to treat is $\bS(2)$. In what follows, we write $G$ for $\Aut(\bS(2))$. For this structure, it was shown \cite{NguyenVanThe2013} that $M(G) = \widehat{G/G^*}$, where $G^{*} = \Aut(\bS(2), P^{*} _{0}, P^{*} _{1}) \leq G$ and $P^{*} _{0}, P^{*} _{1}$ is the partition of $\bS(2)$ into right part and left part. Let $E^{*}$ denote the equivalence relation induced by the partition $(P^{*} _{0}, P^{*} _{1})$. 

\begin{lemma}
$N(G^{*})=\Aut(\bS(2), E^{*})$.
\end{lemma}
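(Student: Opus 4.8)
The plan is to recognize the equivalence relation $E^*$ purely group-theoretically, as the relation ``belonging to the same $G^*$-orbit on $\bS(2)$,'' and then exploit that conjugation permutes orbits. The key sub-claim is that the $G^*$-orbits on $\bS(2)$ are exactly the two parts $P^*_0$ and $P^*_1$. That each orbit is contained in one of them is immediate, since every element of $G^*$ fixes $P^*_0$ and $P^*_1$ setwise; conversely, $G^*$ acts transitively on each part, which is the homogeneity of the expansion $(\bS(2),P^*_0,P^*_1)$ (the Ramsey expansion of \cite{NguyenVanThe2013}). Concretely, in the local-order (circle) representation of $\bS(2)$, an automorphism fixing both parts is nothing but a common order-automorphism of $(\Q,<)$ applied to the two half-circles, and any prescribed such order-automorphism is realized; this gives transitivity directly. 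Hence, for $x,y\in\bS(2)$, we have $(x,y)\in E^*$ iff $y\in G^*\cdot x$.

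Granting the sub-claim, both inclusions are soft. For $N(G^*)\subseteq\Aut(\bS(2),E^*)$: if $g\in N(G^*)$ and $O=G^*\cdot o$ is a $G^*$-orbit, then $g(O)=(gG^*g^{-1})\cdot(g\cdot o)=G^*\cdot(g\cdot o)$ is again a $G^*$-orbit, so $g$ permutes $\{P^*_0,P^*_1\}$, i.e.\ preserves $E^*$; since $g$ is already a tournament automorphism, $g\in\Aut(\bS(2),E^*)$. For the reverse inclusion, note that $G^*$ is precisely the kernel of the natural action of $\Aut(\bS(2),E^*)$ on the two-element set of $E^*$-classes, so $G^*$ has index at most $2$ and is therefore normal in $\Aut(\bS(2),E^*)$, giving $\Aut(\bS(2),E^*)\subseteq N(G^*)$. (The index is in fact exactly $2$, realized by the antipodal map of the circle, which is a tournament automorphism interchanging $P^*_0$ and $P^*_1$.)

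The only step that is not a purely formal manipulation is the transitivity of $G^*$ on each part used in the sub-claim; I expect this to be dispatched in one line by invoking homogeneity of the Ramsey expansion, or, if a self-contained argument is preferred, by a short back-and-forth in the circle model.
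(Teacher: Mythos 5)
Your proof is correct and follows essentially the same route as the paper: the forward inclusion is the paper's argument recast in terms of $G^*$-orbits (both rest on transitivity of $G^*$ on each part, which the paper uses implicitly when choosing $g^*$ with $g^*(x)=y$), and the reverse inclusion, which the paper dismisses as ``easy,'' is your kernel/index-at-most-$2$ observation. Only your parenthetical about the antipodal map is slightly off --- the literal antipodal map does not preserve the countable point set of $\bS(2)$, since no two of its points are antipodal --- but the exact value of the index is not needed for the lemma.
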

\begin{proof}
Let $x \eqrel{E^{*}} y \in \bS(2)$. Let $g\in N(G^{*})$ and $g^{*}\in G^{*}$ so that $g^{*}(x)=y$. Fix $j \in \set{0, 1}$ such that $g(x)\in P^{*} _{j}$. Then because $gg^{*}g^{-1}\in G^{*}$, we have $gg^{*}g^{-1}(g(x))\in P^{*} _{j}$, i.e., $g(y)\in P^{*} _{j}$. In other words, $g(x) \eqrel{E^{*}} g(y)$. So $N(G^{*})\subseteq\Aut(\bS(2), E^{*})$. The other inclusion is easy. 
\end{proof}

Proposition~\ref{p:UMPF} now yields the following.
\begin{cor}
$\Pi(\Aut(\bS(2))) = \overline{\Aut(\bS(2))\cdot E^{*}}$, where the closure is taken in $2^{\bS(2) \times \bS(2)}$.
\end{cor}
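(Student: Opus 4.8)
The plan is to deduce the corollary from Proposition~\ref{p:UMPF} and the preceding lemma, the only genuine task being to put the abstract completion $\widehat{G/N(G^*)}$ into concrete form. Write $G = \Aut(\bS(2))$. By the preceding lemma, $N(G^*) = \Aut(\bS(2), E^*)$; since $N(G^*) \supseteq G^*$ and $G^*$ is co-precompact, $N(G^*)$ is co-precompact as well, so Proposition~\ref{p:UMPF} applies and gives $\Pi(G) = \widehat{G/N(G^*)}$. It therefore suffices to show that, as a $G$-flow, $\widehat{G/\Aut(\bS(2), E^*)}$ is isomorphic to the orbit closure $\overline{G \cdot E^*}$ taken inside $2^{\bS(2)\times\bS(2)}$ for the logic action of $G$; this is an instance of the general identification of $\widehat{G/\Aut(\bF^*)}$ with a space of expansions recalled in the introduction.

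The tool is the orbit map $\Theta \colon G/N(G^*) \to 2^{\bS(2)\times\bS(2)}$, $gN(G^*) \mapsto g \cdot E^*$. Because the logic action is continuous and the stabilizer of the point $E^*$ is exactly $\Aut(\bS(2), E^*) = N(G^*)$, the map $\Theta$ is a continuous, injective, $G$-equivariant map with image the orbit $G \cdot E^*$, which is dense in $\overline{G\cdot E^*}$. If $\Theta$ is moreover a \emph{uniform} isomorphism onto $G \cdot E^*$ equipped with the subspace uniformity of the compact space $2^{\bS(2)\times\bS(2)}$, then it extends to a $G$-equivariant homeomorphism $\widehat{G/N(G^*)} \cong \overline{G\cdot E^*}$, since inside a compact metrizable space the completion of a subspace is its closure --- and this, together with the previous paragraph, is the corollary. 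That $\Theta$ is uniformly continuous is immediate: a basis of entourages of $G/N(G^*)$ is given by the sets $U_{V_A}$ of \eqref{eq:unif}, with $V_A$ the pointwise stabilizer of a finite set $A \subseteq \bS(2)$, and if $(g_1 N(G^*), g_2 N(G^*)) \in U_{V_A}$ then $g_2 \cdot E^* = v \cdot (g_1 \cdot E^*)$ for some $v$ fixing $A$ pointwise, so $g_1 \cdot E^*$ and $g_2 \cdot E^*$ agree on $A \times A$.

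The reverse inclusion of uniformities is where I expect the only real work to lie: one needs, for each finite $A \subseteq \bS(2)$, a finite $B \supseteq A$ such that agreement of $g_1 \cdot E^*$ and $g_2 \cdot E^*$ on $B \times B$ forces $(g_1 N(G^*), g_2 N(G^*)) \in U_{V_A}$. Unwinding \eqref{eq:unif} as above, this amounts to the assertion that the finite partial map $(g_2^{-1}g_1)|_{g_1^{-1}(A)}$ --- which is automatically tournament-preserving, and is $E^*$-preserving exactly when $g_1\cdot E^*$ and $g_2\cdot E^*$ agree on $A\times A$ --- extends, possibly after enlarging its domain, to a global automorphism in $\Aut(\bS(2), E^*)$; equivalently, that $(\bS(2), E^*)$ is a homogeneous expansion of $\bS(2)$. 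I would prove this by a back-and-forth argument over the explicit description of the dense local order $\bS(2)$ and its two parts $P^*_0, P^*_1$ (or, to handle the small gap between quantifier-free types and $\Aut(\bS(2))$-orbits, by invoking the $\omega$-categoricity of $\bS(2)$); I do not anticipate any dynamical difficulty beyond this, the argument being of the same routine nature as the identification of $\widehat{G/\Aut(\bF^*)}$ with a space of expansions in general.
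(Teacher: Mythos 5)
Your proposal is correct and follows the same route as the paper, which simply records the corollary as an immediate consequence of Proposition~\ref{p:UMPF} together with the lemma identifying $N(G^{*})$ with $\Aut(\bS(2),E^{*})$, leaving the identification of $\widehat{G/\Aut(\bS(2),E^{*})}$ with the orbit closure of $E^{*}$ as the standard expansion-space argument from the introduction. The only substantive point you isolate --- that the orbit map is a uniform isomorphism because $(\bS(2),E^{*})$ is a homogeneous expansion --- is exactly the fact implicitly used there, and your back-and-forth/$\omega$-categoricity argument for it is the right way to verify it.
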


Note that as in the case of $M(\Aut(\bS(2)))$ in \cite{NguyenVanThe2013}, it is possible to make an explicit description of the space $\overline{\Aut(\bS(2))\cdot E^{*}}$: roughly, it is the space obtained from the unit circle, where points with rational angle are doubled, and where antipodal points are then identified.  

\bibliography{MetrizableUMF}
%\bibliography{mybiblio}
\end{document}